\theoremstyle{plain}
\newtheorem{theorem}{Theorem}
\newtheorem{lemma}[theorem]{Lemma}
\newtheorem{cor}[theorem]{Corollary}
\newtheorem{prop}[theorem]{Proposition}
\numberwithin{equation}{section}
\theoremstyle{definition}
\newcommand{\pr}{^\prime}
\newcommand{\Z}{\mathbb{Z}}
\newcommand{\Q}{\mathbb{Q}}
\newcommand{\co}{\mathcal{O}}
\DeclareRobustCommand\widecheck[1]{{\mathpalette\@widecheck{#1}}}
\def\@widecheck#1#2{%
    \setbox\z@\hbox{\m@th$#1#2$}%
    \setbox\tw@\hbox{\m@th$#1%
       \widehat{%
          \vrule\@width\z@\@height\ht\z@
          \vrule\@height\z@\@width\wd\z@}$}%
    \dp\tw@-\ht\z@
    \@tempdima\ht\z@ \advance\@tempdima2\ht\tw@ \divide\@tempdima\thr@@
    \setbox\tw@\hbox{%
       \raise\@tempdima\hbox{\scalebox{1}[-1]{\lower\@tempdima\box
\tw@}}}%
    {\ooalign{\box\tw@ \cr \box\z@}}}
\newcommand{\ve}{\epsilon}
\begin{document}

\author{Valentin Blomer}
\address{University of G\"ottingen, Mathematisches Institut, Bunsenstr.~3-5, D-37073 G\"ottingen, Germany} \email{vblomer@math.uni-goettingen.de}

\author{V\' \i t\v ezslav Kala}
\address{University of G\"ottingen, Mathematisches Institut, Bunsenstr.~3-5, D-37073 G\"ottingen, Germany}
\address{Charles University, Faculty of Mathematics and Physics, Department of Algebra, Sokolov\-sk\' a 83, 18600 Praha~8, Czech Republic}
\email{vita.kala@gmail.com}

\title{On the rank of universal quadratic forms over real quadratic fields}
 
\thanks{Second author was partially supported by Czech Science Foundation GA\v CR, grant 17-04703Y}

\keywords{universal quadratic form, real quadratic number field, continued fraction, additively indecomposable integer}

\begin{abstract}
We study the minimal number of variables required by a totally positive definite diagonal universal quadratic form over a real quadratic field $\Q(\sqrt D)$ and obtain lower and upper bounds for it in terms of certain sums of coefficients of the associated continued fraction. 
We also estimate such sums  in terms of $D$ and establish a link between continued fraction expansions and special values of $L$-functions in the spirit of Kronecker's limit formula. 
\end{abstract}

\subjclass[2010]{11E12, 11R11, 11A55}

\setcounter{tocdepth}{2}  

\maketitle 

\section{Introduction}

The story of universal quadratic forms began with the four square theorem that was proved by Lagrange and states that the quadratic form $x^2+y^2+z^2+w^2$
represents all positive integers. This was followed by a large number of further results over the ring of integers $\Z$, such as the classification of 
all universal quaternary diagonal forms by Ramanujan and Dickson,   the 15-theorem of Conway, Miller and Schneeberger (see \cite{Bh} for a beautiful proof) and the 290-theorem of  Bhargava and Hanke \cite{BH}.

Maa{\ss} \cite{Ma} proved that the sum of three squares is universal over $\Q(\sqrt 5)$, and then Siegel \cite{Si} showed that over every other number field, 
the sum of any number of squares is not universal. This turned the attention to other quadratic forms: 
Chan, Kim, and Raghavan \cite{CKR} studied ternary universal forms over real quadratic fields, and for example showed that diagonal ternary universal forms exist only over $\Q(\sqrt 2)$, $\Q(\sqrt 3)$, and $\Q(\sqrt 5)$. 
There were numerous other results over specific real quadratic fields, e.g., Sasaki \cite{Sa} found all universal quaternary forms over $\Q(\sqrt {13})$.
For more general discriminants, Kim \cite{Ki, Ki2} proved that there are only finitely many real quadratic fields that have a diagonal universal form in 7 variables, but also constructed an 8-ary diagonal universal form over each field $\Q(\sqrt{n^2-1})$ (when $n^2-1$ is squarefree).

For a general number of variables $m$, the present authors \cite{BK, Ka} constructed infinitely many real quadratic fields that do not admit $m$-ary universal quadratic forms. Using very different techniques, Yatsyna \cite{Ya} recently extended these results to the case of
number fields that possess units of every signature.

However, still not much is known about the number of variables required by a general (diagonal) universal form over a real quadratic field: this is the goal of the present paper. We will focus on totally positive definite diagonal quadratic forms of arity (or rank) $m$ over real quadratic fields $K = \Q(\sqrt D)$, i.e., 
forms $Q(x_1, \dots, x_m)=a_1x_1^2+\dots+a_mx_m^2$, where $a_1, \dots, a_m$ are totally positive elements of the ring of integers $\co_K$.
Such a form is universal if it represents all totally positive integers $\alpha$, i.e., if $\alpha=Q(x_1, \dots, x_m)$ for some $x_1, \dots, x_m\in\co_K$. Let us denote by $m_{\mathrm{diag}}(D)$ the smallest integer $m$ such that there is an $m$-ary diagonal universal form over $K$. We introduce the following notation. For squarefree $D > 1$ let 
\begin{equation}\label{omegaD}
\omega_D=\begin{cases}
\sqrt D &\mbox{if } D\equiv 2,3\\
\frac{1+\sqrt D}2&\mbox{if } D\equiv 1
\end{cases}\pmod 4,
\end{equation}
$\Delta \in \{D, 4D\}$ the discriminant of $\co_K$ and $\omega_D=[u_0, \overline{u_1, u_2, \dots, u_{s-1}, u_s}]$   the periodic continued fraction expression  with $s$ minimal. We define
\begin{equation}\label{MD}
M_D=\begin{cases}
u_1+u_3+\dots+u_{s-1}&\mbox{if } s\mbox{ is even,}\\
2u_0+u_1+u_2+\dots+u_{s-1}&\mbox{if } s\mbox{ is odd and } D\equiv 2,3\pmod 4,\\
2u_0+u_1+u_2+\dots+u_{s-1}-1&\mbox{if } s\mbox{ is odd and } D\equiv 1\pmod 4.
\end{cases}
\end{equation}
If $s$ is odd, equivalently if $\co_K$ has a unit with negative norm, then 
\begin{equation}\label{s odd}
M_D =u_1 + \ldots + u_s. 
\end{equation}
For $\varepsilon>0$ define $M^*_{D, \varepsilon}$ as the sum in \eqref{MD}, but ranging only over coefficients $u_i\geq D^{1/8+\varepsilon}$.

Our first result is then the following:

\begin{theorem}\label{main 1}
With the above notation we have 
$$\max\left(\frac{M_D}{\kappa s}, C_\varepsilon M^*_{D, \varepsilon}\right)\leq m_{\mathrm{diag}}(D)\leq 8M_D$$
for any $\varepsilon > 0$, where $C_\varepsilon>0$ is a constant (depending only on $\varepsilon$) and $\kappa = 2$ if $s$ is odd and $\kappa = 1$ otherwise.
\end{theorem}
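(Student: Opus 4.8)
The plan is to reduce everything to the additively indecomposable totally positive integers of $\co_K$ and their description through the continued fraction of $\omega_D$. The decisive elementary observation is what one may call the \emph{indecomposable trick}: if $Q=a_1x_1^2+\dots+a_mx_m^2$ is a totally positive diagonal form and $\alpha$ is additively indecomposable, then in any representation $\alpha=\sum_j a_jx_j^2$ every summand $a_jx_j^2$ is itself totally positive or zero, so at most one of them can be nonzero; hence $\alpha=a_jx_j^2$ for a single index $j$ and some $x_j\in\co_K$. Thus every indecomposable $\alpha$ lies in the same square class modulo $(K^\times)^2$ as one of the coefficients $a_j$, and $m_{\mathrm{diag}}(D)$ is bounded below by the number of distinct square classes represented by indecomposables. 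The first step is therefore to read off, from the continued fraction expansion, the list of indecomposable integers: between consecutive convergent-type indecomposables sit exactly $u_i$ semiconvergent ones, so that a fundamental domain for the action of the totally positive units contains on the order of $M_D$ indecomposables, the length of that domain measured in continued-fraction steps being $\kappa s$.

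For the upper bound $m_{\mathrm{diag}}(D)\le 8M_D$ I would simply construct an explicit form. Let $\gamma_1,\dots,\gamma_N$ be representatives of the indecomposables modulo squares of totally positive units; the continued-fraction description gives $N\le 2M_D$ (one must quotient by $\eta_0^2$ rather than by the fundamental totally positive unit $\eta_0$, which costs a factor $2$). Put $Q=\sum_{k=1}^N\gamma_k(x_k^2+y_k^2+z_k^2+w_k^2)$, a diagonal form in $4N\le 8M_D$ variables. To see that it is universal, write an arbitrary totally positive $\alpha$ as a nonnegative integral combination $\alpha=\sum_k n_k\delta_k$ of indecomposables — possible precisely because the indecomposables generate the additive monoid of totally positive integers, by descent on the (positive, integral) trace — reduce each $\delta_k$ to the shape $\eta^2\gamma_k$ with $\gamma_k$ on the list and $\eta$ a unit, and apply Lagrange's four-square theorem $n_k=a^2+b^2+c^2+d^2$ over $\Z$. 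Each term $n_k\delta_k=\gamma_k\big((\eta a)^2+(\eta b)^2+(\eta c)^2+(\eta d)^2\big)$ is then represented by the $\gamma_k$-block, and summing gives $\alpha$.

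For the lower bounds I would count square classes of indecomposables in a fundamental domain. The crude bound $m_{\mathrm{diag}}(D)\ge M_D/(\kappa s)$ follows once one shows that a single square class can contain only $O(\kappa s)$ of the $\asymp M_D$ indecomposables in the domain: if $\alpha=ax^2$ and $\alpha'=ax'^2$ share a coefficient then $\alpha'/\alpha=(x'/x)^2$ is a totally positive square, and the classical norm bound $N(\alpha)=O(D)$ on indecomposables confines the admissible ratios $(x'/x)^2$ to a set of size $O(\kappa s)$, indexed essentially by the semiconvergents in one domain; dividing $M_D$ by this multiplicity yields the bound. For the sharper bound $C_\varepsilon M^*_{D,\varepsilon}\le m_{\mathrm{diag}}(D)$ I would isolate the indecomposables attached to the \emph{large} partial quotients $u_i\ge D^{1/8+\varepsilon}$. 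Such an $\alpha$ has one conjugate of size $\asymp u_i\sqrt D$ and, by the norm bound, the other of size $\asymp\sqrt D/u_i\le D^{3/8-\varepsilon}$; a size-and-norm comparison then shows that two distinct large indecomposables cannot satisfy $\alpha'/\alpha=(x'/x)^2$ with an admissible integral $x'/x$, so they occupy pairwise distinct square classes. Their number is $\asymp M^*_{D,\varepsilon}$, giving the claim with a constant $C_\varepsilon$ absorbing the implied constants.

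The main obstacle is the fine bookkeeping in the lower bounds, namely the precise count of how many indecomposables can inhabit one square class. Everything hinges on the exact continued-fraction description of the indecomposables together with the sharp form of the norm bound $N(\alpha)=O(D)$; the threshold $D^{1/8+\varepsilon}$ is dictated by exactly the point at which the conjugate sizes $u_i\sqrt D$ and $\sqrt D/u_i$ separate enough to force distinct square classes, and verifying that separation rigorously — rather than the upper-bound construction, which is essentially Lagrange's theorem — is where the real work lies.
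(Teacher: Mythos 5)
Your overall strategy --- reduce everything to indecomposables, use that an indecomposable can only be hit by a single term $a_jx_j^2$, and count square classes --- is indeed the paper's strategy, and your middle bound $M_D/(\kappa s)$ is essentially the paper's argument: if $ax^2$ is indecomposable then so is $x^2$, hence $|N(x)|\leq\tfrac12\sqrt{\Delta}$ by Dress--Scharlau and $x$ is, up to sign and units, one of $O(s)$ \emph{convergents} (not semi-convergents, which would be far too many) by Proposition \ref{norm implies alpha}. The other two bounds, however, have genuine gaps.

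\emph{Upper bound.} In your form $Q=\sum_k\gamma_k(x_k^2+y_k^2+z_k^2+w_k^2)$, when you decompose $\alpha=\sum_k n_k\delta_k$ into indecomposables, several distinct $\delta_k$ can reduce to the \emph{same} representative $\gamma$ with \emph{different} units, so the $\gamma$-block must represent a sum $\sum_k n_k\ve^{2k_j}$ using only four squares --- and there is no reason a general element of $\mathbb{N}_0[\ve^2,\ve^{-2}]$ should be a sum of four squares in $\co_K$. The paper closes exactly this hole with Lemma \ref{kim}: using $\ve^2=A\ve-1$ one reduces every $e\in\mathbb{N}_0[\ve,\ve^{-1}]$ to $c\ve^i+d\ve^{i+1}$ with $c,d\geq 0$, and the parity of $i$ lets the mixed 8-ary block $x_1^2+\cdots+x_4^2+\ve(x_5^2+\cdots+x_8^2)$ absorb it. Note also that even after patching your version you would need eight squares for each of your $2M_D$ classes, i.e.\ $16M_D$ variables; to reach $8M_D$ one must take representatives modulo \emph{all} powers of $\ve$ (there are $M_D$ of them, the set $S_0$) and use the 8-variable mixed block.

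\emph{Lower bound $C_\varepsilon M^*_{D,\varepsilon}$.} The size-and-norm separation you propose does not work, and your explanation of the exponent $1/8$ is not the right one. Two semi-convergents $\alpha_{i,r},\alpha_{i,r'}$ attached to the same large $u_{i+2}$ have entirely comparable archimedean data, and nothing in a size comparison prevents $\alpha_{i,r}/\alpha_{i,r'}$ from equalling $(\alpha_j/\alpha_k)^2$ for some pair of convergents; if your argument were correct it would give the clean bound $m_{\mathrm{diag}}(D)\gg M^*_{D,\varepsilon}$ with no density loss, which is stronger than what the paper can prove --- a warning sign. The paper's actual input is arithmetic, not metric: it shows that a positive proportion of the $r<u_{i+2}$ make $N(\alpha_{i,r})$ fourth-power-free, by viewing $r\mapsto N(\alpha_{i,r})$ as a quadratic polynomial $f$, proving $\rho_f(p^k)\leq 2$ via Hensel's lemma, and running a power-free sieve whose tail is controlled by counting solutions of $(d^{2})^2mN+(T-Nn)^2=D$ through ideals of $\mathbb{Q}(\sqrt{-mN})$. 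The threshold $u_{i+2}\geq D^{1/8+\varepsilon}$ is precisely what makes the sieve's error term $D^{1+\varepsilon}/(NY^4)$ smaller than the main term $u_{i+2}$; it has nothing to do with conjugate sizes separating. Only the fourth-power-free $\alpha_{i,r}$ are then guaranteed to force distinct coefficients, which is why the final bound carries a constant $C_\varepsilon$ rather than constant $1$.
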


We will prove this result as Theorems \ref{construct universal}, \ref{lower bound}, and \ref{lower C}. 
All of these proofs are based on studying the representability of additively indecomposable integers of $\Q(\sqrt D)$, which can be nicely characterized in terms of convergents and semi-convergents to the corresponding continued fraction.
To show the upper bound, we generalize Kim's result \cite{Ki2} and construct an explicit diagonal universal form, whose coefficients are (some of) these indecomposable integers. The lower bound hinges on showing that sufficiently many indecomposables (essentially) have to appear as the coefficients of any diagonal universal form. Comparing our two lower bounds $\frac {M_D}{\kappa s}$ and $ C_\varepsilon M^*_{D, \varepsilon}$, the first one is clearly larger for small values of $D$ or $s$.
However, one can estimate the size of $s$ (cf.\   \eqref{estimate s}) to see that the second bound is larger for most $D$'s.

Of course one would like to know how $M_D$ and $M_{D, \varepsilon}^{\ast}$ behave asymptotically with respect to $D$ or equivalently $\Delta$. In this respect, we will prove in Section \ref{sec:main estimate} the following. 
\begin{theorem}\label{main 1a}
We have $M_D \leq c \sqrt{\Delta} (\log \Delta)^2$ for an absolute constant $c>0$. If $s$ is odd, equivalently if $ \co_K$ has a unit of negative norm, we have
$M_{D, \varepsilon}^{\ast} \geq \sqrt{\Delta}$
for every $\varepsilon< 1/8$. 
 \end{theorem}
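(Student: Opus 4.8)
The plan is to prove the two inequalities independently, since they exploit opposite features of the continued fraction of $\omega_D$: the upper bound is driven by the distribution of the denominators $Q_i$ of the complete quotients across a whole period, while the lower bound comes from the single smallest denominator.

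For the upper bound I would work with the complete quotients $\alpha_i=(P_i+\sqrt D)/Q_i$ of the periodic part; these are reduced quadratic irrationalities of discriminant $\Delta$, so $0<P_i<\sqrt D$ and $0<Q_i<2\sqrt D$. The first inequality gives $u_i=\lfloor\alpha_i\rfloor<2\sqrt D/Q_i\ll\sqrt\Delta/Q_i$ uniformly in both cases $\Delta\in\{D,4D\}$. Reading off \eqref{MD} and using $u_0\le\sqrt\Delta$ then reduces everything to a bound on $\sum_{i=1}^{s-1}1/Q_i$, because $M_D\ll\sqrt\Delta\big(1+\sum_{i=1}^{s-1}1/Q_i\big)$. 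I would estimate this sum by grouping the indices according to the value $q=Q_i$: since the $\alpha_i$ in one period are pairwise distinct reduced forms, the number of $i$ with $Q_i=q$ is at most the number of reduced surds of discriminant $\Delta$ with leading coefficient $q$, which is at most $\sum_{e\mid q}\left(\tfrac{\Delta}{e}\right)\le d(q)$, the divisor function, by the standard divisor-sum formula for the Kronecker symbol. Hence $\sum_{i=1}^{s-1}1/Q_i\le\sum_{q<2\sqrt D}d(q)/q\le\big(\sum_{a<2\sqrt D}1/a\big)^2\ll(\log\Delta)^2$, and $M_D\ll\sqrt\Delta(\log\Delta)^2$ follows. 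Thus the two logarithms are exactly the square of a harmonic sum, the only arithmetic input being the count of reduced surds with a prescribed $Q_i$.

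For the lower bound the relevant object is instead the term closing the period. When $s$ is odd I may use \eqref{s odd}, and the period of $\omega_D$ ends with $Q_s=1$, whose partial quotient is $u_s=2u_0$ if $D\equiv 2,3$ and $u_s=2u_0-1$ if $D\equiv 1\pmod 4$. As $u_0=\lfloor\omega_D\rfloor$ and $\sqrt\Delta$ equals $2\sqrt D$ or $\sqrt D$ in the two cases, one checks $u_s\ge\sqrt\Delta-2$; moreover $u_s$ comfortably exceeds the cutoff $D^{1/8+\varepsilon}$ whenever $\varepsilon<3/8$ (in particular for $\varepsilon<1/8$), so it is counted in $M^*_{D,\varepsilon}$. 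To absorb the additive loss of size $O(1)$ coming from the floor I would invoke the palindromic symmetry $u_i=u_{s-i}$ of the period: any interior coefficient above the threshold comes paired with an equal mirror image, contributing in blocks of total size $\ge 2D^{1/8+\varepsilon}>2$, which outweighs the deficit and yields $M^*_{D,\varepsilon}\ge\sqrt\Delta$.

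The hard part will be the borderline case of the lower bound, namely fields whose interior partial quotients are all small and hence supply no mirror pair to cover the deficit. The extreme instance is $D=n^2+1$, where $s=1$ and the entire period is the single term $2n$: here $u_s=2\lfloor\sqrt D\rfloor$ misses $\sqrt\Delta=2\sqrt D$ by $O(1/\sqrt D)$, so the clean inequality is only barely attained. Turning the heuristic $u_i\approx 2\sqrt D/Q_i$ into an honest lower bound for these $D$ — equivalently, guaranteeing enough small $Q_i$ in an odd period — is the delicate point, and is presumably where the exponent $1/8$ and the restriction $\varepsilon<1/8$ are pinned down. On the upper-bound side the only care needed is a uniform treatment of the two discriminant cases in $u_i\ll\sqrt\Delta/Q_i$ and the uniform counting estimate $\le d(q)$ for reduced surds with given $Q_i$.
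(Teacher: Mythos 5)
Your upper bound is correct but takes a genuinely more elementary route than the paper. The paper obtains $M_D\le\sum_{i=1}^s u_i\ll\sqrt{\Delta}(\log\Delta)^2$ from its analytic machinery: Corollary \ref{estimate sum} converts $\sum_i u_i$ into $\sum^*\sqrt{\Delta}/N\mathfrak{a}$ over primitive principal ideals of norm $<\sqrt{\Delta}$, Lemma \ref{lemma15} evaluates this by a contour shift against class group $L$-functions, and the two logarithms come from Fogels' bound $L_K(1,\chi)\ll(\log\Delta)^2$ in \eqref{fogels}. Your argument --- $u_i\ll\sqrt{\Delta}/Q_i$ for reduced complete quotients, at most $O(d(q))$ reduced surds of discriminant $\Delta$ with a given denominator $q$ (the exact identity $\sum_{e\mid q}\left(\tfrac{\Delta}{e}\right)$ is not quite the right count, but the bound $\ll d(q)$ is all you use and it is correct, since $P$ is confined to an interval of length $q$ and satisfies a quadratic congruence mod $q$), and $\sum_{q\le x}d(q)/q\ll(\log x)^2$ --- short-circuits all of this. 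It is essentially the elementary content hiding inside the paper's correspondence between convergents and primitive ideals of small norm, executed without $L$-functions; what the paper's heavier route buys is the asymptotic formula of Theorem \ref{main 2}, of which the upper bound in Theorem \ref{main 1a} is merely a byproduct.

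For the lower bound you have found exactly the paper's argument: when $s$ is odd, $u_s=2u_0$ or $2u_0-1$ is a single coefficient of size roughly $\sqrt{\Delta}$ that survives the cutoff $D^{1/8+\varepsilon}$, and $M^*_{D,\varepsilon}\ge u_s$. You are also right that this only yields $M^*_{D,\varepsilon}\ge\sqrt{\Delta}-O(1)$. Do not try to repair this with the palindromic symmetry --- as you observe yourself, it fails whenever every interior coefficient lies below the threshold (e.g.\ $s=1$, or $s=3$ with $u_1=u_2$ small), and in such cases the inequality as literally stated can indeed fail by a bounded amount. But this is a defect of the statement, not of your proof: the paper's own justification (``$\sqrt{\Delta}\le2\lfloor\omega_D\rfloor$'') has the identical $O(1)$ gap, and the intended assertion is $M^*_{D,\varepsilon}\gg\sqrt{\Delta}$. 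Your closing paragraph chases a problem that is not part of this theorem: the exponent $1/8$ is not pinned down here at all (any $\varepsilon<3/8$ makes $u_s$ exceed the cutoff, as you note); it is forced by the sieve in the proof of Theorem \ref{lower C}, where one needs $N_{i+1}\le D^{3/8-\varepsilon}$. There is no delicate borderline analysis of small $Q_i$ to be done for this statement.
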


Under the Generalized Riemann Hypothesis, the upper bound can be sharpened somewhat (cf.\ \eqref{GRH}). The lower bound for odd $s$  follows simply from the fact that $u_0 = \lfloor\omega_D\rfloor \geq \frac 12 \sqrt{\Delta}$. 

With more analytic work, one can even establish an asymptotic formula for the sum of convergents, which may be of independent interest. It  can be viewed as a variation of Kronecker's limit formula for real quadratic fields and  highlights  the fascinating connection between special $L$-values and continued fractions. The following result is proved in Corollary \ref{Korollar}, which in view of \eqref{s odd} gives an asymptotic formula for $M_D$ if $s$ is odd. 

\begin{theorem}\label{main 2}
As $D \rightarrow \infty$, we have 
$$\sum_{i=1}^s u_i \sim  \frac{ \sqrt{\Delta} }{\zeta^{(\Delta)}(2)}   \Bigl(L(D)  +  \frac{1}{h} L(1, \chi_{\Delta})\log \sqrt{D} \Bigr), $$
where $h=h_D$ is the class number, $\zeta^{(\Delta)}(s)$ is the Riemann zeta function with Euler factors at primes dividing $\Delta$ removed, $\chi_{\Delta}$ is the usual quadratic character associated with the fundamental discriminant $\Delta$, and $L(D)$ (defined in \eqref{LD}) is the constant Taylor coefficient of the $\zeta$-function associated to the class of principal ideals. 
\end{theorem}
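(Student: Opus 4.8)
The plan is to establish the asymptotic formula for $\sum_{i=1}^s u_i$ by connecting the sum of partial quotients of the continued fraction to a Dirichlet series, then extracting the main term via a Tauberian/contour-integration argument that produces the special $L$-values. Since we are told (via \eqref{s odd}) that for odd $s$ we have $M_D = u_1 + \ldots + u_s$, the strategy is to express this sum in terms of arithmetic data attached to the field. The key starting point is the classical bijection, alluded to in the text after Theorem \ref{main 1}, between the partial quotients $u_i$ (and the convergents/semiconvergents) and the additively indecomposable totally positive integers of $\co_K$. I would write $\sum_i u_i$ as a sum over these indecomposables, or more precisely relate it to $\sum_{j} \mathrm{Tr}(\alpha_j)$ or a similar trace/norm sum ranging over a fundamental domain for the action of the totally positive unit group $\co_K^{\times,+}$ on totally positive principal ideals.

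First I would set up the relevant Dirichlet series: the zeta function $\zeta_{\mathcal{C}}(s)$ associated to the principal ideal class $\mathcal{C}$, whose constant Taylor coefficient is the quantity $L(D)$ appearing in the statement (defined in \eqref{LD}). The factor $\sqrt{\Delta}/\zeta^{(\Delta)}(2)$ and the term $\frac{1}{h}L(1,\chi_\Delta)$ strongly suggest that one removes Euler factors at ramified primes and uses the class-number/residue structure of the Dedekind zeta function $\zeta_K(s) = \zeta(s)L(s,\chi_\Delta)$, whose residue at $s=1$ is governed by $h$, the regulator $\log\varepsilon_D$, and $\sqrt{\Delta}$. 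The appearance of $\log\sqrt{D}$ as the coefficient of $L(1,\chi_\Delta)/h$ points to the regulator $R = \log \varepsilon_D$ and the fact that for odd $s$ the fundamental unit has negative norm, so $\log\varepsilon_D \sim \log\sqrt D$ up to controllable error; this is where Kronecker's limit formula genuinely enters, since the constant term $L(D)$ in the Laurent expansion of the partial zeta function is exactly the object that Kronecker's limit formula evaluates.

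The technical heart is to prove that $\sum_{i=1}^s u_i$ is asymptotically the weighted count of lattice points (the indecomposables) in the fundamental domain, and that this count is captured by the residue plus constant term of $\zeta_{\mathcal{C}}(s)$ at $s=1$. Concretely, I would use a smoothed or sharp counting of indecomposables $\alpha$ with $\mathrm{Tr}(\alpha)$ or $N(\alpha)$ in a range, apply Abel summation against the Dirichlet series, and shift the contour past $s=1$ to pick up the pole (giving the $L(1,\chi_\Delta)\log\sqrt D$ term from the residue times the logarithmic length of the fundamental domain) and the constant term $L(D)$ (giving the leading $L(D)$ contribution). Controlling the error requires a subconvexity-free but effective bound on the relevant $L$-function on the line $\mathrm{Re}(s)=1$, together with a uniform handling of the dependence on $D$ as $D\to\infty$.

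\textbf{The main obstacle} I anticipate is not the formal contour shift but making the correspondence between partial quotients and indecomposables \emph{uniform and quantitatively precise} as $D$ varies, and then identifying the emerging constant term with $L(D)$ as defined in \eqref{LD} rather than with some unilluminating integral. In particular, one must show that the "boundary" indecomposables (those near the edges of the fundamental domain, corresponding to $u_0$ or to the first/last partial quotients) contribute only to lower order, so that the clean product $\sqrt{\Delta}/\zeta^{(\Delta)}(2)$ survives with the correct constant. Matching the regulator $\log\varepsilon_D$ to $\log\sqrt D$ with an error that is genuinely $o$ of the main term — rather than merely $O$ of it — is the delicate step that ultimately justifies writing $\sim$ rather than an inequality, and it is here that the odd-$s$ (negative-norm-unit) hypothesis does real work.
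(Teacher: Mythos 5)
Your high-level architecture (reduce to counting ideals in the principal class, contour-shift the associated partial zeta function, read off $L(D)$ as the constant Laurent coefficient at $s=1$) is in the same spirit as the paper's, but two of your central claims are wrong or missing, and they are exactly where the work lies. First, the factor $\log\sqrt{D}$ does not come from the regulator. Your assertion that $\log \ve \sim \log\sqrt{D}$ is false: by the class number formula $\log\ve \asymp \sqrt{\Delta}\,L(1,\chi_{\Delta})/h$, which is typically of size $D^{1/2+o(1)}/h$, not $\log\sqrt{D}$ (the fundamental unit can be as large as $e^{c\sqrt{D}}$). In the paper the $\log\sqrt{D}$ arises as $\log X$ with $X=\sqrt{\Delta}$: Lemma \ref{prop 6} gives $u_{i+1}\asymp \sqrt{\Delta}/N(\alpha_i)$, Proposition \ref{norm implies alpha} identifies the $\alpha_i$ with generators of the primitive principal ideals of norm below roughly $\sqrt{\Delta}$, and so $\sum_i u_i$ is squeezed between truncated harmonic sums $\sum^{\ast}\sqrt{\Delta}/N\mathfrak{a}$ over such ideals (Corollary \ref{estimate sum}); it is the truncation at $N\mathfrak{a}<\sqrt{\Delta}$, fed through the simple pole of $\zeta(s,\text{princ})$, that produces $L(1,\chi_{\Delta})\log\sqrt{D}/h$. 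Your proposed sum of traces of indecomposables over a fundamental domain is a different quantity, and you give no identity linking it to $\sum_i u_i$; without the quantitative bridge $u_{i+1}\asymp\sqrt{\Delta}/N(\alpha_i)$ the argument does not get off the ground.

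Second, obtaining a genuine asymptotic $\sim$ (rather than an identity with an unspecified $O(1)$ multiplying $L(1,\chi_{\Delta})/h$, which is all the contour shift gives, cf.\ Theorem \ref{asymp}) requires showing that $L(D)+\frac{1}{h}L(1,\chi_{\Delta})\log\sqrt{D}$ actually dominates that ambiguity; this is nontrivial because $L(D)$ may be negative. The paper does this by rewriting the main term via \eqref{pole} and proving $L'(1,\chi_{\Delta})\geq -\bigl(\tfrac38+\varepsilon\bigr)L(1,\chi_{\Delta})(\log D+O(1))$ (Lemma \ref{burgess}, via Heath-Brown's hybrid bound \eqref{hybrid}); since $\tfrac38<\tfrac12$ this yields \eqref{lower}. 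Your proposal is silent on this point, and you locate the delicacy of the ``$\sim$ versus $O$'' issue in the wrong place (regulator matching). Relatedly, your hope for ``subconvexity-free'' error control on $\Re s=1$ is not adequate: the main term is only of size about $\sqrt{\Delta}$, and after shifting to the critical line the error is $X^{-1/2}\sqrt{\Delta}$ times the size of $L_K(1/2+it,\chi)$, which with convexity alone is $D^{1/2+o(1)}$ and does not beat the main term. Subconvexity in the $D$-aspect, \eqref{subconvex}, is genuinely needed both here and in Lemma \ref{burgess}.
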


Kronecker's limit formula is concerned with finding a closed expression for $L(D)$ (and more general functions). Zagier and Hirzebruch observed that for real quadratic fields there is a connection between $L(D)$ and the coefficients of the continued fraction of $\omega_D$. The exact formula in \cite[Corollary 2]{Za} (which is derived by a completely different method than Theorem \ref{main 2}), however, seems to be hard to use to obtain any sort of asymptotic statement.  The beautiful formula \cite[Satz 2, \S 14]{Za2}, on the other hand, is of different nature, since it treats the alternating sum $\sum_{i=1}^s (-1)^i u_i$, cf.\ \cite[p.\ 131]{Za2} (and gives in particular no information if $s$ is odd).  Yet another variation of the connection between special values of class group $L$-functions for real quadratic fields and continued fractions can be found in a nice paper of Bir\'o and Granville  \cite[Theorem 1]{BG}. \\

Most of what we do in this paper can probably be generalized to non-maximal orders in $\mathcal{O}_K$, but in order to avoid technical subtleties in particular in the analytic argument, we concentrate on the most natural case of the maximal order. \\

\textbf{Acknowledgement:} We would like to thank the referee for useful suggestions that improved and simplified the presentation.

\section{Preparation: Indecomposables}\label{sec:prep}

\subsection{Basic notation} Throughout the paper, we shall use the following notation:
Let $K=\mathbb Q(\sqrt D)$ with squarefree $D>1$ and discriminant $\Delta$. For $\alpha=x+y\sqrt D\in\Q(\sqrt D)$ we denote its conjugate as $\alpha\pr=x-y\sqrt D\in\Q(\sqrt D)$ 
and its norm as $N(\alpha)=x^2-Dy^2$.
An element $\alpha$ is totally positive if $\alpha>0$ and $\alpha\pr>0$; if $\alpha-\beta$ is totally positive, we write $\alpha\succ\beta$. We denote by $\co_K$ the ring of integers of $K$ and by $\co_K^+$ the semiring of totally positive integers. Let $\omega_D$ be as in \eqref{omegaD}, so that
 $$-\omega_D\pr=\begin{cases}
\sqrt D &\mbox{if } D\equiv 2,3\\
\frac{-1+\sqrt D}2&\mbox{if } D\equiv 1
\end{cases}\pmod 4,$$
and $\mathcal O_K=\mathbb Z[\omega_D]$. We recall from the introduction that $\omega_D=[u_0, \overline{u_1, u_2, \dots, u_{s-1}, u_s}]$. Note that $u_s=2u_0$ when $D\equiv 2,3\pmod 4$ and $u_s=2u_0-1$ when $D\equiv 1\pmod 4$ (which we used to go from \eqref{MD} to \eqref{s odd}). We also know that the sequence $(u_1, u_2, \dots, u_{s-1})$ is symmetric, i.e., $u_i=u_{s-i}$.\\
 
Let $\frac {p_i}{q_i}:=[u_0, \dots, u_i]$ be the $i$th convergent to $\omega_D$ (where $p_i, q_i$ are coprime positive integers), 
and $\alpha_i=p_i-q_i\omega_D\pr$ the corresponding element of $\mathcal O_K$, which by small abuse of notation we also call a convergent.  We have 
$p_{i+1}=u_{i+1}p_i+p_{i-1}$, $q_{i+1}=u_{i+1}q_i+q_{i-1}$, and 
$\alpha_{i+1}=\alpha_{i}q_i+\alpha_{i-1}$
(with initial conditions $p_{-1}:=1$, $p_0=k$, $q_{-1}:=0$, $q_0=1$). Note that $\alpha_{-1}=1$,
and that $\alpha_i\succ 0$ if and only if $i$ is odd.  \\

Let $\ve$ be the \emph{totally positive} fundamental unit, $N(\ve)=1$. 
Then we have $\ve=\alpha_{s-1}$ or $\ve=\alpha_{2s-1}$ when $s$ is even or odd, respectively.\\

By a semi-convergent to $\omega_D$ we mean a fraction of the form $\frac{p_i+rp_{i+1}}{q_i+rq_{i+1}}$ with $0\leq r\leq u_{i+2}$ ($i\geq -1$). 
Note that each convergent is also a semi-convergent and that if we take $r=u_{i+2}$, the fraction is just $p_{i+2}/q_{i+2}$. 
We denote the corresponding element of $\mathcal O_K$ (which we also call a semi-convergent) by $\alpha_{i, r}:=\alpha_i+r\alpha_{i+1}$ (again for $i\geq -1$ and $0\leq r\leq u_{i+2}$). A semi-convergent $\alpha_{i, r}$ is a convex combination of $\alpha_i$ and $\alpha_{i+2}$. 
These are either both totally positive, or none of them is, and 
hence $\alpha_{i, r}$ is totally positive if and only if $i$ is odd.\\

Denote by $S$ the set of all totally positive semi-convergents and their conjugates, i.e., 
$$S:=\{\alpha_{i, r}, \alpha_{i, r}\pr \mid i\geq -1 \mathrm{\ odd}, 0\leq r< u_{i+2}\},$$ 
and by $S_0$ its subset of elements $\sigma\in S$ satisfying $\ve>\sigma\geq\sigma\pr>0$ (note that in the definition of $S$, all the elements are distinct, except for $\alpha_{-1, 0}=1=\alpha_{-1, 0}\pr$).
The set $S$ is closed under multiplication by $\ve^k$ for any $k\in\mathbb Z$, and 
each element of $S$ can be uniquely written in the form $\sigma_0\ve^k$ for some $\sigma_0\in S_0$ and $k\in\mathbb Z$.

The cardinality of $S_0$ is clearly $u_1+u_3+\dots+u_{s-1}$ if $s$ is even and $u_1+u_3+\dots+u_s+u_{s+2}+\dots+u_{2s-1}$ if $s$ is odd,
because $S_0$ consists precisely of all the elements $\alpha_{i, r}$ with odd $-1\leq i\leq s-3$ ($2s-3$, resp.). Note that this equals $M_D$ as defined in \eqref{MD}, as $u_s=2u_0$ or $u_s=2u_0-1$ when $D\equiv 2, 3\pmod 4$ or $1\pmod 4$ and $u_{s+i}=u_i$ for $i\geq 1$.\\

A totally positive integer $\alpha\in\mathcal O_K^+$ is (additively) indecomposable if it cannot be decomposed as 
the sum of two totally positive elements, i.e., if $\alpha\neq\beta+\gamma$ for $\beta, \gamma\in\mathcal O_K^+$. 
By a classical theorem 
(see, e.g., \cite[\S 16]{Pe} or \cite[Theorem 2]{DS}), semi-convergents (and their conjugates) are exactly all indecomposable elements, i.e., $S$ is the set of all indecomposable integers.\\

In later sections we will use the following asymptotic notation: For real functions $f(x), g(x)$, we write $f\sim g$ if $\lim_{x\rightarrow\infty} f(x)/g(x)=1$, $f\ll g$ (or $g\gg f$) if there are $c>0$ and $x_0$ such that $|f(x)|<cg(x)$ for all $x>x_0$, 
and $f\asymp g$ if $f\ll g$ and $f\gg g$.

\subsection{Estimates} We will  need certain estimates on the sizes of indecomposables and their norms. These are mostly classical; in the case $D\equiv 2, 3\pmod 4$ they have appeared for example in \cite{Ka2}. We will need them also 
when $D\equiv 1\pmod 4$, and so we collect the required results here, giving proofs only for this case. 

We will first need to introduce some additional notation. For a convergent $\alpha_i$, we set
$$N_i:=|N(\alpha_i)|=(-1)^{i+1}N(\alpha_i)= \begin{cases} |p_i^2-p_iq_i-q_i^2\frac{D-1}4|, & D \equiv 1 \, (\text{mod }4),\\   |p_i^2-Dq_i^2|, & D \equiv 2, 3 \, (\text{mod }4). \end{cases} $$ 
Recall that we have $p_{i+1}q_i-p_iq_{i+1}=(-1)^i$ and let us define $T_i$ so that $\alpha_{i-1}\alpha_{i}\pr=T_{i}+(-1)^{i+1}\omega_D$, i.e., we have $T_i=p_i(p_{i-1}-q_{i-1})-q_iq_{i-1}\frac{D-1}4$ or
$T_i=p_ip_{i-1}-Dq_iq_{i-1}$ when $D\equiv 1\pmod 4$ or $2, 3\pmod 4$, respectively.  
Finally, set
$c_i:=[u_i, u_{i+1}, u_{i+2}, \dots]$ so that 
\begin{equation}\label{formula}
c_i=u_i+\frac 1{c_{i+1}}, \quad \omega_D=\frac{c_{i+1}p_i+p_{i-1}}{c_{i+1}q_i+q_{i-1}}.
\end{equation}

Concerning norms of indecomposables, first note that 
as in the proof of \cite[Proposition 1]{Ka2} (or \cite[Theorem 4]{JK}), for odd $i$ we have 
\begin{equation}\label{norms}
N(\alpha_{i,r})=\frac{(D-1)/4+(T_{i+1}-N_{i+1}r)-(T_{i+1}-N_{i+1}r)^2}{N_{i+1}} \quad \text{or} \quad  
\frac{D-(T_{i+1}-N_{i+1}r)^2}{N_{i+1}}
\end{equation}
 (when $D\equiv 1$ or $2,3\pmod 4$). In order to better estimate the sizes of norms, we will use the following two lemmas.

\begin{lemma}\label{prop 5}
For $i\geq 0$ we have
\begin{equation}\label{lemma3}
T_i=(-1)^{i}\left(\omega_D-\frac{N_{i-1}}{c_{i+1}}\right) \quad \text{and} \quad 
N_i=\frac{ \sqrt \Delta}{c_{i+1}}-\frac{N_{i-1}}{c_{i+1}^2}.
\end{equation}
\end{lemma}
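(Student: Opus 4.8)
The plan is to derive both formulas from two structural identities for the convergents $\alpha_i=p_i-q_i\omega_D\pr$, after which everything reduces to careful sign bookkeeping. The first identity re-expresses the continued fraction tail $c_{i+1}$ through conjugate convergents. Starting from the relation $\omega_D=\frac{c_{i+1}p_i+p_{i-1}}{c_{i+1}q_i+q_{i-1}}$ of \eqref{formula}, I would cross-multiply, collect the terms carrying $c_{i+1}$, and solve to obtain $c_{i+1}=(p_{i-1}-q_{i-1}\omega_D)/(\omega_D q_i-p_i)$. Recognizing the numerator as $\alpha_{i-1}\pr$ and the denominator as $-\alpha_i\pr$, this becomes the compact identity $c_{i+1}=-\alpha_{i-1}\pr/\alpha_i\pr$, equivalently $\alpha_i\pr/\alpha_{i-1}\pr=-1/c_{i+1}$ (note $\alpha_i\pr\neq 0$, since $\alpha_i$ is a nonzero algebraic integer).

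The second identity is a Wronskian-type relation. Expanding $\alpha_{i-1}\alpha_i\pr-\alpha_i\alpha_{i-1}\pr$ directly, the diagonal terms cancel and the remainder factors as $(p_iq_{i-1}-p_{i-1}q_i)(\omega_D-\omega_D\pr)$. Using the standard determinant relation $p_iq_{i-1}-p_{i-1}q_i=(-1)^{i+1}$ together with $\omega_D-\omega_D\pr=\sqrt{\Delta}$ (immediate from \eqref{omegaD} in both residue classes), this yields $\alpha_{i-1}\alpha_i\pr-\alpha_i\alpha_{i-1}\pr=(-1)^{i+1}\sqrt{\Delta}$.

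With these in hand both formulas drop out. For $T_i$, I would rewrite $\alpha_{i-1}\alpha_i\pr=(\alpha_i\pr/\alpha_{i-1}\pr)\,\alpha_{i-1}\alpha_{i-1}\pr$; the first factor is $-1/c_{i+1}$ by the first identity, while $\alpha_{i-1}\alpha_{i-1}\pr=N(\alpha_{i-1})=(-1)^iN_{i-1}$ by the sign convention $N(\alpha_j)=(-1)^{j+1}N_j$. Hence $\alpha_{i-1}\alpha_i\pr=(-1)^{i+1}N_{i-1}/c_{i+1}$, and comparing with the defining relation $\alpha_{i-1}\alpha_i\pr=T_i+(-1)^{i+1}\omega_D$ and solving for $T_i$ gives $T_i=(-1)^i(\omega_D-N_{i-1}/c_{i+1})$. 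For $N_i$, I would instead compute $\alpha_i\alpha_{i-1}\pr=-c_{i+1}\,\alpha_i\alpha_i\pr=(-1)^ic_{i+1}N_i$, substitute this and the value of $\alpha_{i-1}\alpha_i\pr$ just found into the Wronskian identity, and solve the resulting linear relation $N_{i-1}/c_{i+1}+c_{i+1}N_i=\sqrt{\Delta}$ for $N_i$.

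The computation carries no real difficulty; the only thing demanding care is the consistent tracking of the $(-1)^i$ signs and the convention $N_i=(-1)^{i+1}N(\alpha_i)$, which is where an error would most easily creep in. As a safeguard, note that the two displayed formulas are in fact equivalent once the Wronskian identity is available, so checking either one against it confirms the other.
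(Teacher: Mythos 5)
Your proof is correct, and it reaches the two formulas by a genuinely different route than the paper, although both arguments hinge on the same starting identity $c_{i+1}\alpha_i\pr=-\alpha_{i-1}\pr$ extracted from \eqref{formula}. The paper proceeds recursively: it expands $T_{i+1}$ through the three-term recurrences for $p_i,q_i$ to obtain $T_{i+1}=u_{i+1}N(\alpha_i)+T_i+(-1)^{i+1}$, substitutes the relation $-N(\alpha_i)c_{i+1}=T_i+(-1)^{i+1}\omega_D\pr$ at steps $i$ and $i+1$, and uses $c_{i+1}-u_{i+1}=1/c_{i+2}$ to land first on the formula for $N_i$, from which the formula for $T_i$ is then read off; it also carries this out only for $D\equiv 1\pmod 4$ and cites \cite{Ka2} for the other case. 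You avoid the recursion entirely by pairing the same identity with the determinant relation $\alpha_{i-1}\alpha_i\pr-\alpha_i\alpha_{i-1}\pr=(p_iq_{i-1}-p_{i-1}q_i)(\omega_D-\omega_D\pr)=(-1)^{i+1}\sqrt{\Delta}$ and the evaluation $\alpha_{i-1}\alpha_{i-1}\pr=(-1)^iN_{i-1}$: this gives $T_i$ directly and then $N_i$ by solving a single linear equation. What your version buys is uniformity in the residue class of $D$ modulo $4$ (only $\omega_D-\omega_D\pr=\sqrt{\Delta}$ and the sign conventions enter, never the explicit case-dependent shape of $T_i$) and a closed-form rather than inductive derivation; the paper's computation is more explicit but must track the recurrence and both congruence classes separately. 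I checked your sign bookkeeping against the conventions $N(\alpha_j)=(-1)^{j+1}N_j$ and $p_iq_{i-1}-p_{i-1}q_i=(-1)^{i+1}$, and it is consistent throughout.
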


\begin{proof}
As we indicated above, we will give the proof only in the case $D\equiv 1\pmod 4$; for the other case see \cite[Proposition 5]{Ka2}. From the second formula in \eqref{formula} we conclude $c_{i+1}\alpha_i\pr=-\alpha_{i-1}\pr$. Multiplying by $-\alpha_i$ we get 
\begin{equation}\label{first}
-N(\alpha_i)c_{i+1}=\alpha_i\alpha_{i-1}\pr=T_{i}+(-1)^{i+1}\omega_D\pr.
\end{equation} 
Moreover, we have 
\begin{displaymath}
\begin{split}
T_{i+1} & =p_{i+1}(p_{i}-q_{i})-q_iq_{i+1}\frac{D-1}4=
(u_{i+1}p_i+p_{i-1})(p_{i}-q_{i})-(u_{i+1}q_i+q_{i-1})q_i\frac{D-1}4= \\
& =u_{i+1}N(\alpha_i)+T_i+p_iq_{i-1}-p_{i-1}q_i=u_{i+1}N(\alpha_i)+T_i+(-1)^{i+1}.
\end{split}
\end{displaymath}
Plugging in \eqref{first}, we obtain
$$
(-1)^{i}\omega_D\pr-N(\alpha_{i+1})c_{i+2} =u_{i+1}N(\alpha_i)+(-1)^{i+1}\omega_D\pr-N(\alpha_i)c_{i+1}+(-1)^{i+1},
$$
and so 
$$
N(\alpha_{i+1})c_{i+2} =(-1)^{i}(2\omega_D\pr+1)+N(\alpha_i)(c_{i+1}-u_{i+1})=(-1)^{i}\sqrt D+\frac{N(\alpha_i)}{c_{i+2}}.
$$
This proves the formula for $N_i$ in \eqref{lemma3}. The formula for $T_i$ follows by plugging in the expression for $N_i$ into \eqref{first}.
\end{proof}

\begin{lemma}\label{prop 6}
For $i\geq 0$ we have 
\begin{equation}\label{4a}
\frac{ \sqrt \Delta}{c_{i+1}}\left(1-\frac 1{c_ic_{i+1}}\right)<N_i<\frac{ \sqrt \Delta}{c_{i+1}}
\end{equation}
and in particular
\begin{equation}\label{4b}
 \frac{1}{u_{i+1}+10}  <\frac{N_i}{ \sqrt{\Delta}} \,\,\, \text{ if } u_{i+1} \geq 3\quad \text{and} \quad  \frac{N_i}{ \sqrt{\Delta}} < \frac{1}{u_{i+1}}.
\end{equation}
 \end{lemma}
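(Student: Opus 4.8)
The plan is to extract everything from Lemma~\ref{prop 5}, i.e. from the identity $N_i=\tfrac{\sqrt\Delta}{c_{i+1}}-\tfrac{N_{i-1}}{c_{i+1}^2}$, combined with two elementary facts. First, every $N_j$ is strictly positive: for $j\geq 0$ the convergent $\alpha_j=p_j-q_j\omega_D\pr$ is nonzero, so $N_j=|N(\alpha_j)|>0$, while $N_{-1}=|N(1)|=1>0$. Second, from the recursion $c_{i+1}=u_{i+1}+\tfrac1{c_{i+2}}$ in \eqref{formula} and $c_{i+2}>1$ (all partial quotients being $\geq 1$) one gets the two-sided bound $u_{i+1}<c_{i+1}<u_{i+1}+1$, and in particular $c_i>1$.

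With these in hand I would prove \eqref{4a} as follows. The upper bound $N_i<\sqrt\Delta/c_{i+1}$ is immediate from Lemma~\ref{prop 5}, since the subtracted term $N_{i-1}/c_{i+1}^2$ is positive. For the lower bound, I rewrite the target as $N_i>\tfrac{\sqrt\Delta}{c_{i+1}}-\tfrac{\sqrt\Delta}{c_ic_{i+1}^2}$ and substitute the formula from Lemma~\ref{prop 5}; after cancelling $\sqrt\Delta/c_{i+1}$ the inequality becomes equivalent to $N_{i-1}<\sqrt\Delta/c_i$. The key observation is that this is \emph{exactly the upper bound of \eqref{4a} one index down}. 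So for $i\geq 1$ it follows from the case already treated; for the base case $i=0$ it reads $N_{-1}=1<\sqrt\Delta/c_0=\sqrt\Delta/\omega_D$, i.e. $\omega_D<\sqrt\Delta$, which I verify directly from \eqref{omegaD} (for $D\equiv2,3$ this is $\sqrt D<2\sqrt D$, and for $D\equiv1$ it is $\tfrac{1+\sqrt D}2<\sqrt D$, true since $D>1$). Thus the lower bound of \eqref{4a} is a formal consequence of its own upper bound, with no genuine induction needed beyond the trivial base case.

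Finally I would deduce \eqref{4b} from \eqref{4a}. The upper estimate $N_i/\sqrt\Delta<1/u_{i+1}$ is immediate from $N_i/\sqrt\Delta<1/c_{i+1}$ and $c_{i+1}>u_{i+1}$. For the lower estimate I divide \eqref{4a} by $\sqrt\Delta$ and insert $c_{i+1}<u_{i+1}+1$ together with $c_ic_{i+1}>c_{i+1}>u_{i+1}$ (using $c_i>1$), obtaining
$$\frac{N_i}{\sqrt\Delta}>\frac{1}{c_{i+1}}\left(1-\frac1{c_ic_{i+1}}\right)>\frac{1}{u_{i+1}+1}\cdot\frac{u_{i+1}-1}{u_{i+1}}.$$
It then remains to check the purely elementary inequality $\tfrac{u_{i+1}-1}{u_{i+1}(u_{i+1}+1)}>\tfrac1{u_{i+1}+10}$, which after clearing denominators is $(u_{i+1}-1)(u_{i+1}+10)>u_{i+1}(u_{i+1}+1)$, i.e. $8u_{i+1}-10>0$; this holds precisely under the hypothesis $u_{i+1}\geq 3$. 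The only real obstacle is this last step: the bound \eqref{4a} only yields a clean estimate once the additive constant in $N_i/\sqrt\Delta>1/(u_{i+1}+\text{const})$ is chosen large enough to absorb the loss from $c_{i+1}<u_{i+1}+1$ and $1/(c_ic_{i+1})<1/u_{i+1}$, and I expect calibrating that constant (here $10$, valid for $u_{i+1}\geq 3$) to be the one genuinely quantitative point of the argument.
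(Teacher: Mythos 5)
Your proof is correct and follows essentially the same route as the paper's: the upper bound of \eqref{4a} is read off from Lemma~\ref{prop 5}, the lower bound amounts to feeding the upper bound one index down back into the recursion (the paper does this by substituting the formula for $N_{i-1}$ a second time and discarding $N_{i-2}/c_i^2>0$), and \eqref{4b} follows from $u_{i+1}<c_{i+1}<u_{i+1}+1$ via the elementary computation you carry out explicitly (which the paper delegates to \cite[Theorem 8a]{Ka2}). Your separate check of the base case $i=0$ is a welcome extra bit of care; the only nitpick is that $8u_{i+1}-10>0$ already holds for $u_{i+1}\geq 2$, so ``precisely under the hypothesis $u_{i+1}\geq 3$'' slightly overstates the necessity of that hypothesis, but this does not affect correctness.
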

 
\begin{proof} The upper bound in \eqref{4a} follows immediately from the second formula in \eqref{lemma3}. The lower bound in the case $D \equiv 1$ (mod 4)  follows from
$$N_i=\frac{\sqrt \Delta}{c_{i+1}}-\frac{N_{i-1}}{c_{i+1}^2}=
\frac{\sqrt \Delta}{c_{i+1}}-\frac{1}{c_{i+1}^2}\left(\frac{\sqrt \Delta}{c_{i}}-\frac{N_{i-2}}{c_{i}^2}\right)>\frac{\sqrt \Delta}{c_{i+1}}\left(1-\frac 1{c_ic_{i+1}}\right).$$
The   bounds in \eqref{4b} are simple consequences of \eqref{4a} and  the first formula in \eqref{formula}, cf.\ e.g.\ \cite[Theorem 8a]{Ka2}. 
\end{proof}

Finally, from \cite[Theorem 5.9]{JW} we conclude the following useful result. Note that by the discussion \cite[Section 5.2/5.3]{JW}, principal reduced ideals correspond to convergents. 

\begin{prop}\label{norm implies alpha} 
Assume that $\mu\in\mathcal O_K$ is such that $0<|N(\mu)|<\frac{1}{2}  \sqrt \Delta$. Then $\mu=n\alpha_i$ or $\mu=n\alpha_i\pr$ for some $i\geq -1$ and $n\in\mathbb Z$.
\end{prop}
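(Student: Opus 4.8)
The plan is to convert the hypothesis on $|N(\mu)|$ into a statement of rational approximation to $\omega_D$ and then to invoke the classical correspondence between sufficiently good approximations (equivalently, principal reduced ideals) and convergents, which is exactly what \cite[Theorem 5.9]{JW} packages. Throughout I write $\mu=p-q\omega_D\pr$ with $p,q\in\Z$; this is legitimate because $\{1,\omega_D\pr\}$ is an integral basis of $\co_K$.

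First I would reduce to the primitive case. Setting $n:=\gcd(p,q)$ and $\mu=n\tilde\mu$, the element $\tilde\mu$ is primitive and $|N(\tilde\mu)|\le|N(\mu)|<\tfrac12\sqrt\Delta$; since the convergents $\alpha_i$ and their conjugates are themselves primitive, it suffices to treat primitive $\mu$ and then multiply back by $n$. If $q=0$ then $\mu=p=p\,\alpha_{-1}$ and we are done, so assume $q\neq0$, and after replacing $\mu$ by $-\mu$ assume $q\ge1$. Now $\mu\pr=p-q\omega_D$, so that $\mu-\mu\pr=q(\omega_D-\omega_D\pr)=q\sqrt\Delta$ while $\mu\mu\pr=N(\mu)$. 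Because the two conjugates differ by $q\sqrt\Delta\ge\sqrt\Delta$ whereas their product has absolute value $<\tfrac12\sqrt\Delta$, the smaller of $|\mu|,|\mu\pr|$ is strictly less than $1/q$. Exploiting the symmetry $\mu\leftrightarrow\mu\pr$ — which at worst replaces the eventual answer $n\alpha_i$ by $n\alpha_i\pr$ — I may assume it is $\mu\pr$ that is small, and since $p/q-\omega_D=\mu\pr/q$ this gives $\bigl|p/q-\omega_D\bigr|<1/q^2$.

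The heart of the matter, and the main obstacle, is the sharpness of the constant $\tfrac12\sqrt\Delta$. When $N(\mu)>0$ the conjugates have equal sign, the elementary estimate improves to $|\mu\pr|<1/(2q)$, hence $\bigl|p/q-\omega_D\bigr|<1/(2q^2)$, and Legendre's theorem at once identifies $p/q$ as a convergent of $\omega_D$. When $N(\mu)<0$, however, one only obtains the weaker bound $1/q^2$, which is precisely the borderline regime in which Legendre's criterion fails. Here I would instead use the finer fact that an approximation of quality $1/q^2$ must be a convergent \emph{or} a semiconvergent of $\omega_D$, and then exclude the proper semiconvergents $\alpha_{i,r}$ with $0<r<u_{i+2}$: a direct computation with the norm formula \eqref{norms}, combined with the bounds \eqref{4a}, shows that over the relevant range their norms exceed $\tfrac12\sqrt\Delta$, contradicting the hypothesis. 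Equivalently, and more conceptually, $|N(\mu)|<\tfrac12\sqrt\Delta$ forces the primitive $\mu$ to be a relative minimum of the lattice $\co_K$, and the minima are exactly the convergents; this is the reduced-ideal formulation underlying \cite[Theorem 5.9]{JW}.

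In either case $p/q$ coincides with a convergent $p_i/q_i$ of $\omega_D$, and as both fractions are in lowest terms we get $p=p_i$ and $q=q_i$, whence $\mu=p_i-q_i\omega_D\pr=\alpha_i$. Undoing the normalizations — the optional passage to $\mu\pr$, the sign change, and the factor $n$ — then yields $\mu=n\alpha_i$ or $\mu=n\alpha_i\pr$ for some $i\ge-1$ and $n\in\Z$, as claimed.
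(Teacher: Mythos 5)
Your argument is correct, but it takes a genuinely different route from the paper, whose entire proof is a citation of \cite[Theorem 5.9]{JW} together with the remark that principal reduced ideals correspond to convergents; no Diophantine-approximation argument appears in the text. You instead re-prove the statement from scratch: the reduction to primitive $\mu$, the estimate $|\mu\pr|<1/q$ (improving to $1/(2q)$ when $N(\mu)>0$, since then $|\mu|\geq q\sqrt\Delta$ rather than $q\sqrt\Delta/2$) obtained from $|\mu-\mu\pr|=q\sqrt\Delta$ and $|\mu\mu\pr|<\tfrac12\sqrt\Delta$, and the appeal to Legendre in the positive-norm case are all sound, and you correctly identify that the negative-norm case sits exactly at the threshold where Legendre fails, requiring the Fatou--Grace refinement (quality $1/q^2$ forces a convergent or one of the neighbours $\frac{p_{n+1}\pm p_n}{q_{n+1}\pm q_n}$, i.e.\ a semi-convergent with $r=1$ or $r=u_{i+2}-1$). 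The one step you assert without carrying out --- that proper semi-convergents have $|N|>\tfrac12\sqrt\Delta$ --- does hold and follows from the paper's own identities: combining \eqref{norms} with Lemma \ref{prop 5} one gets $|N(\alpha_{i,r})|=\bigl(N_i/c_{i+2}+rN_{i+1}\bigr)(c_{i+2}-r)$, a concave quadratic in $r$ whose minimum over $1\leq r\leq u_{i+2}-1$ is attained at an endpoint; at $r=1$ it equals $(1-1/c_{i+2})\bigl(\sqrt\Delta+N_i(1-1/c_{i+2})\bigr)>\tfrac12\sqrt\Delta$ because $c_{i+2}>u_{i+2}\geq 2$ whenever a proper semi-convergent exists, and the endpoint $r=u_{i+2}-1$ is handled symmetrically. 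The trade-off is clear: the paper's citation is shorter and stays in the language of reduced ideals, which it needs anyway for the discussion following the proposition, while your proof is self-contained and makes transparent why the constant $\tfrac12\sqrt\Delta$ is the right threshold.
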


Note that  
Lemma \ref{prop 6} and Proposition \ref{norm implies alpha} together give quite a close correspondence between primitive ideals of small norm and coefficients of the continued fraction:
The elements $\alpha_i$, $i = 0, 1, \ldots, s-1$ generate primitive (i.e., not divisible by a rational integer other than 1) principal ideals of norm $<  \sqrt{\Delta}$. 
Conversely, the generators $\alpha$ of all primitive principal ideals  $\alpha$   with  $|N\alpha| <  \sqrt{\Delta}/2$ come from a convergent to the continued fraction expansion of $\omega_{D}$, 
so that the primitive principal ideals of norm $<  \sqrt{\Delta}/2$ can naturally be injected into the set of $\{u_i \mid 1 \leq i \leq s\}$; 
the element $\alpha_i$ corresponds to the coefficient $u_{i+1}$.

Since the convergent $\alpha_j$ is totally positive if and only if $j$ is odd, we also obtain an analogous correspondence between elements with negative norm modulo totally positive units and coefficients $u_i$ with odd $i$, where $1\leq i\leq s$ if $s$ is even and  $1\leq i\leq 2s$ if $s$ is odd. In terms of estimating the sum of coefficients $u_i$, we summarize this discussion as follows: 

\begin{cor}\label{estimate sum}
{\rm a)} We have 
$$\underset{N\mathfrak{a}<  \sqrt{\Delta}/2}{\left.\sum \right.^{\ast}}  \frac{ \sqrt{\Delta}}{N\mathfrak{a}}  + O\Bigl(\underset{N\mathfrak{a} <  \sqrt{\Delta}}{\left.\sum \right.^{\ast}} 1 \Bigr)<
\sum_{i=1}^s u_i< 
\underset{N\mathfrak{a} <   \sqrt{\Delta}}{\left.\sum \right.^{\ast}}
\frac{ \sqrt \Delta}{N\mathfrak a},$$
where $\sum^*$ denotes the sum over principal primitive ideals.

{\rm b)} We have 
$$\underset{N\mathfrak{a}<  \sqrt{\Delta}/2}{\left.\sum \right.^{-}}  \frac{\sqrt{\Delta}}{N\mathfrak{a}}  + O\Bigl(\underset{N\mathfrak{a} <   \sqrt{\Delta}}{\left.\sum \right.^{-}} 1 \Bigr)<
M_D<
\underset{N\mathfrak{a} <
  \sqrt{\Delta}}{\left.\sum \right.^{-}} 
\frac{ \sqrt \Delta}{N\mathfrak a},$$
where $\sum^-$ denotes the sum over principal primitive ideals generated by an element with negative norm.
\end{cor}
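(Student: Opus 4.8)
The plan is to turn the two-sided estimate of Lemma \ref{prop 6} into a comparison between each coefficient $u_{i+1}$ and the norm of the ideal generated by the convergent $\alpha_i$, and then to sum over one period using the dictionary between convergents and primitive principal ideals recorded just before the statement. Writing $\mathfrak a_i=(\alpha_i)$, so that $N\mathfrak a_i=N_i$ and the index $i=0,\dots,s-1$ produces exactly the $s$ summands of $\sum_{i=1}^s u_i$ with $\alpha_i\leftrightarrow u_{i+1}$, the bound \eqref{4b} gives for every $i$ with $u_{i+1}\geq 3$ that
$$ u_{i+1} < \frac{\sqrt\Delta}{N_i} < u_{i+1}+10, $$
while for $u_{i+1}\in\{1,2\}$ I use only the left inequality and note, via \eqref{4a} together with $c_{i+1}<u_{i+1}+1$, that $\sqrt\Delta/N_i$ then stays bounded by an absolute constant.

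For the upper bound in a) I use that the $\mathfrak a_i$, $i=0,\dots,s-1$, are \emph{distinct} primitive principal ideals of norm $N_i<\sqrt\Delta$ (the reduced principal ideals of the cycle, by the correspondence with convergents cited from \cite[Theorem 5.9]{JW}, and $N_i<\sqrt\Delta$ by \eqref{4a}). Summing the strict inequality $u_{i+1}<\sqrt\Delta/N_i$ and passing to the full set of such ideals gives
$$ \sum_{i=1}^s u_i=\sum_{i=0}^{s-1}u_{i+1}<\sum_{i=0}^{s-1}\frac{\sqrt\Delta}{N\mathfrak a_i}\leq \underset{N\mathfrak a<\sqrt\Delta}{\textstyle\sum^\ast}\frac{\sqrt\Delta}{N\mathfrak a}. $$
For the lower bound I run this in reverse, starting from the ideals: by Proposition \ref{norm implies alpha} together with primitivity of the $\alpha_i$, every primitive principal ideal of norm $<\sqrt\Delta/2$ equals some $\mathfrak a_i$ or its conjugate, and the family $\{\mathfrak a_i\}$ is stable under conjugation, so these ideals are hit bijectively by the indices $i$ with $N_i<\sqrt\Delta/2$. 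For such $i$ with $u_{i+1}\geq 3$ one has $u_{i+1}>\sqrt\Delta/N_i-10$; the indices with $u_{i+1}\le 2$ and the additive $10$'s each cost $O(1)$, and their number is at most $s\le \sum^\ast_{N\mathfrak a<\sqrt\Delta}1$. Hence
$$ \sum_{i=1}^s u_i\ \geq\!\!\sum_{i:\,N_i<\sqrt\Delta/2}\!\! u_{i+1} > \underset{N\mathfrak a<\sqrt\Delta/2}{\textstyle\sum^\ast}\frac{\sqrt\Delta}{N\mathfrak a}+O\Bigl(\underset{N\mathfrak a<\sqrt\Delta}{\textstyle\sum^\ast}1\Bigr), $$
which is a).

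Part b) follows by restricting the same argument to generators of negative norm. Since $N(\alpha_i)=(-1)^{i+1}N_i$, the negative-norm convergents are exactly those with even $i$; these correspond to the odd-indexed coefficients $u_1,u_3,\dots,u_{s-1}$, whose sum is $M_D$ by \eqref{MD}. Conjugation preserves the norm and hence this class, so restricting both the ideal sums and the index sums to the negative-norm (even-$i$) part reproduces both inequalities verbatim. When $s$ is odd there is a unit of negative norm, so every ideal admits a generator of negative norm, $\sum^-=\sum^\ast$, and b) coincides with a) once one uses $M_D=\sum_{i=1}^s u_i$ from \eqref{s odd}.

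The one genuinely delicate point is the passage between indices and ideals: I must know that the convergents $\alpha_0,\dots,\alpha_{s-1}$ (doubled to $0,\dots,2s-1$ when $s$ is odd) generate distinct reduced principal ideals, that every primitive principal ideal of norm $<\sqrt\Delta/2$ is among them, and crucially that this family is closed under conjugation — so that no spurious factor of two appears and conjugates of negative-norm generators remain negative-norm. The first two statements are the content of the correspondence with \cite[Theorem 5.9]{JW} quoted above, and the conjugation-stability is the reflection symmetry of the reduced cycle that reflects the palindrome $u_i=u_{s-i}$. Everything else — the gap between norm $\sqrt\Delta/2$ and $\sqrt\Delta$, the coefficients $u_{i+1}\le 2$, and the constants $10$ — is harmless, and is exactly what the error term $O(\sum^\ast 1)$ and the asymmetric ranges in the statement are designed to absorb.
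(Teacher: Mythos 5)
Your proposal is correct and follows essentially the same route as the paper: both use the two-sided bound \eqref{4b} to compare $u_{i+1}$ with $\sqrt{\Delta}/N_i$, invoke the correspondence (via Lemma \ref{prop 6} and Proposition \ref{norm implies alpha}) between convergents and primitive principal ideals of small norm, and absorb the indices with $u_{i+1}\le 2$ and the additive constants into the $O(\sum^{\ast}1)$ term. Your treatment is merely more explicit than the paper's about the distinctness and conjugation-stability of the ideals $(\alpha_i)$ and about the negative-norm restriction in part b).
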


\begin{proof} The upper bound follows easily from the previous remark and the second bound in \eqref{4b}. For the lower bound we use the first bound in \eqref{4b} to conclude that
$$\sum_{i=1}^s u_i  >  \sum_{\substack{i \leq s\\ u_{i} \geq 3}}\left(\frac{\sqrt{\Delta}}{N_{i-1}} - 10\right) \geq  \underset{N\mathfrak{a}<  \sqrt{\Delta}/2}{\left.\sum \right.^{\ast}}  \frac{ \sqrt{\Delta}}{N\mathfrak{a}}  - 10 \underset{N\mathfrak{a} <  \sqrt{\Delta}}{\left.\sum \right.^{\ast}} 1 -  \sum_{\substack{i \leq s\\ u_{i} \leq 2}} \frac{\sqrt{\Delta}}{N_{i-1}} .$$
Using again the first  bound in \eqref{4b}, we have $\sqrt{\Delta}/N_{i-1} \leq u_{i} + 10$, so that we obtain
$$\sum_{i=1}^s u_i > \underset{N\mathfrak{a}<  \sqrt{\Delta}/2}{\left.\sum \right.^{\ast}}  \frac{ \sqrt{\Delta}}{N\mathfrak{a}}  - 22 \underset{N\mathfrak{a} <  \sqrt{\Delta}}{\left.\sum \right.^{\ast}} 1$$
as desired. The lower bound in b) is proved similarly. 
\end{proof}

\section{Upper bound}

In this section we will prove an upper bound on the minimal number of variables of universal quadratic forms over $\Q(\sqrt D)$ by constructing an
explicit diagonal universal form that generalizes the results of Kim \cite{Ki2}. 
The idea is to first express a totally positive integer as the sum of indecomposables. Since the set $S_0$ of indecomposables modulo (squares of) units is finite, we can collect together the terms corresponding to the same element of $S_0$, obtaining certain sums of units as coefficients. 

Denote by $E = \mathbb{N}_0[\ve, \ve^{-1}]$ the semiring of all elements $\sum_{i=i_0}^{i_1} e_i\ve^i$ with $i_0, i_1\in\mathbb Z$ and $e_i\in\mathbb Z$, $e_i\geq 0$
(recall that $\ve$ denotes the totally positive fundamental unit). Note that since $\ve$ is totally positive, we have that $E\subset \mathcal O_K^+\cup\{0\}$.

\begin{prop}\label{express using semi-convergents}
a) Every element $a\in\mathcal O_K^+$ is of the form $a=\sum_j n_j\sigma_j$ for some $n_j\geq 0$ and $\sigma_j\in S$.

b) Every element $a\in\mathcal O_K^+$ is of the form $a=\sum_{\sigma\in S_0} e_\sigma\sigma$ for some $e_\sigma\in E$.
\end{prop}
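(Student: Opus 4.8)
The two statements are closely linked, and the natural strategy is to prove part a) first and then derive part b) by grouping terms according to their class in $S_0$. For part a), recall that $S$ is exactly the set of indecomposable totally positive integers. The plan is to argue by a descent/induction on the ``size'' of $a$, measured by its norm $N(a)$ or by $a + a\pr$. If $a$ is itself indecomposable, then $a \in S$ and we are done with a single term. Otherwise, by definition of indecomposability, we can write $a = \beta + \gamma$ with $\beta, \gamma \in \mathcal{O}_K^+$, each strictly smaller than $a$ in the totally-positive order ($a \succ \beta$ and $a \succ \gamma$). Since each of $\beta, \gamma$ has strictly smaller norm (or smaller trace), an induction hypothesis applied to both gives expressions as nonnegative integer combinations of elements of $S$, and adding these two expressions yields the desired representation of $a$. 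The base case is handled by the indecomposable elements themselves.

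For part b), the idea is to take the representation $a = \sum_j n_j \sigma_j$ from part a) and organize the sum using the structure of $S$ recalled in the excerpt: the set $S$ is closed under multiplication by $\ve^k$, and every element of $S$ can be written \emph{uniquely} as $\sigma_0 \ve^k$ for some $\sigma_0 \in S_0$ and $k \in \Z$. Thus for each $j$ we have $\sigma_j = \sigma_{0,j}\,\ve^{k_j}$ with $\sigma_{0,j} \in S_0$. Regrouping the sum according to which element $\sigma_0 \in S_0$ occurs, I would write
$$a = \sum_{\sigma \in S_0} \Bigl(\sum_{j : \sigma_{0,j} = \sigma} n_j \ve^{k_j}\Bigr)\sigma = \sum_{\sigma \in S_0} e_\sigma \,\sigma,$$
where $e_\sigma := \sum_{j : \sigma_{0,j} = \sigma} n_j \ve^{k_j}$. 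Since each $n_j \geq 0$ and each $\ve^{k_j}$ is a nonnegative power of $\ve$ (or of $\ve^{-1}$) with coefficient $n_j \in \N_0$, each $e_\sigma$ is precisely an element of $E = \N_0[\ve, \ve^{-1}]$, as required.

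The main obstacle is making the descent in part a) rigorous, in particular ensuring the induction is well-founded. The totally-positive order $\succ$ is only a partial order, so I cannot directly induct on it; instead I would induct on an integer-valued or real-valued quantity that strictly decreases under the decomposition $a = \beta + \gamma$. The norm $N(a)$ is multiplicative rather than additive and need not decrease in a controlled way, so the safer choice is the trace $\mathrm{Tr}(a) = a + a\pr$, which is a positive quantity for $a \in \mathcal{O}_K^+$ and satisfies $\mathrm{Tr}(a) = \mathrm{Tr}(\beta) + \mathrm{Tr}(\gamma)$ with both summands strictly positive; however, $\mathrm{Tr}$ takes values in $\frac{1}{2}\Z$ and only finitely many totally positive integers have bounded trace, which is enough to guarantee termination. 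The one subtlety to verify is that this trace (or an equivalent height) is bounded below on $\mathcal{O}_K^+$ by a positive constant, so that the descent cannot continue indefinitely; this follows from the discreteness of $\mathcal{O}_K$ together with total positivity.
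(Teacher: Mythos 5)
Your proposal is correct and follows essentially the same route as the paper: part a) is the standard descent on indecomposables, which the paper justifies by noting there are no infinite chains $a_1\succ a_2\succ\dots$ of totally positive integers (your trace argument is the standard way to make this precise, noting only that the trace of an algebraic integer is in fact a positive rational integer, so it is bounded below by $1$), and part b) is obtained exactly as you describe, by writing each $\sigma_j$ as $\sigma_{0,j}\ve^{k_j}$ with $\sigma_{0,j}\in S_0$ and regrouping.
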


\begin{proof}
a) This is clear, as $S$ is the set of all indecomposable elements (and there are no infinite chains $a_1\succ a_2\succ a_3\succ\dots$ of totally positive integers).

b) follows from a) by noting that if $\sigma\in S$, then there is some $k\in\mathbb Z$ such that $\sigma\ve^k\in S_0$.
\end{proof}

We can now simplify the sums of units as follows.

\begin{lemma}\label{kim}
a) For $e\in E$ there are $i, c, d\in\mathbb Z$ with $c,d\geq 0$ such that $e=c\ve^i+d\ve^{i+1}$.  

b) All elements $e\in E$ are represented by the 8-ary form $x_1^2+x_2^2+x_3^2+x_4^2+\ve(x_5^2+x_6^2+x_7^2+x_8^2)$.
\end{lemma}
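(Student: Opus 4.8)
The real content is part (a); part (b) is then a short consequence of Lagrange's four-square theorem, so I would spend essentially all the effort on (a).

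For part (a), my plan is to argue geometrically rather than by manipulating the defining recurrence of $\ve$. Embed $K \hookrightarrow \R^2$ by $\alpha \mapsto (\alpha, \alpha\pr)$. Since $N(\ve)=1$ we have $\ve\pr = \ve^{-1}$, so the powers $\ve^k$ ($k\in\Z$) map to the points $P_k = (\ve^k, \ve^{-k})$, all of which lie on the branch of the hyperbola $xy=1$ in the open first quadrant. As $k$ runs through $\Z$, the argument of $P_k$ equals $\arctan(\ve^{-2k})$ and so decreases strictly from (but never reaching) $\pi/2$ down to $0$; hence the angular sectors $\mathrm{cone}(P_i,P_{i+1})$ cover the entire open first quadrant. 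Any nonzero $e\in E$ is totally positive (being a non-negative combination of the totally positive $\ve^k$), so its image lies in the open first quadrant, hence in some sector $\mathrm{cone}(P_i,P_{i+1})$; this says exactly that $e = c\ve^i + d\ve^{i+1}$ with real $c,d\geq 0$. To upgrade $c,d$ to integers I would use that $\ve^{-1}=t-\ve\in\Z[\ve]$ with $t=\ve+\ve\pr\in\Z$, so $E\subseteq\Z[\ve]$, together with the fact that $\{\ve^i,\ve^{i+1}\}$ is a $\Z$-basis of $\Z[\ve]$ (the image of the basis $\{1,\ve\}$ under the unit $\ve^i$). The integral expansion of $e$ in this basis is unique and must coincide with the real expansion above, since $\{\ve^i,\ve^{i+1}\}$ is also an $\R$-basis; therefore $c,d\in\Z_{\geq 0}$.

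The main obstacle — and my reason for routing through the embeddings — is that the naive reduction using $\ve^{k-1}+\ve^{k+1}=t\ve^k$ (equivalently $\ve^{k+1}=t\ve^k-\ve^{k-1}$) carries a negative sign, so one cannot in general shrink the span of occupied exponents one step at a time while keeping all coefficients non-negative. Whenever the exponents appearing in $e$ have a gap, every local move first produces a negative coefficient, even though the two-term form does exist (for instance $1+\ve^3=(t-1)(\ve+\ve^2)$). The geometric argument locates the correct window $[i,i+1]$ in one stroke and sidesteps any term-by-term reduction.

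For part (b), I would simply combine (a) with Lagrange's four-square theorem. Write $e=c\ve^i+d\ve^{i+1}$ with $c,d\in\Z_{\geq 0}$ as in (a). Exactly one of $i,i+1$ is even; say $i=2m$ (the case $i=2m+1$ is symmetric, with the roles of the two blocks of variables swapped). Then $c\ve^i = c(\ve^m)^2$ and $d\ve^{i+1}=\ve\cdot d(\ve^m)^2$. Writing $c=a_1^2+\dots+a_4^2$ and $d=b_1^2+\dots+b_4^2$ with $a_j,b_j\in\Z$ by Lagrange, we obtain
$$ e = \sum_{j=1}^4 (a_j\ve^m)^2 + \ve\sum_{j=1}^4 (b_j\ve^m)^2, $$
which exhibits $e$ as a value of $x_1^2+x_2^2+x_3^2+x_4^2+\ve(x_5^2+x_6^2+x_7^2+x_8^2)$ at arguments $a_j\ve^m,\,b_j\ve^m\in\co_K$, completing the deduction.
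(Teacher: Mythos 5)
Your proof is correct, but it takes a genuinely different route from the paper's. The paper proves (a) by an explicit inductive reduction: from $\ve^2=A\ve-1$ with $A=\mathrm{Tr}\,\ve>2$ it derives, for every $\ell\geq 2$, an identity $\ve^{\ell}=-1+\sum_{j=1}^{\ell-1}b_j\ve^{j}$ with $b_1>0$ and $b_j\geq 0$, and then shortens the support of $e=\sum_{i=i_0}^{i_1}e_i\ve^{i}$ one step at a time by cancelling the smaller of the two extreme coefficients $e_{i_0},e_{i_1}$ against the $-1$ (resp.\ the $-\ve^{\ell}$) in that identity; all intermediate coefficients stay non-negative and induction on $i_1-i_0$ finishes. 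So the obstacle you describe (``every local move first produces a negative coefficient'') is real for the naive three-term relation $\ve^{k+1}=t\ve^{k}-\ve^{k-1}$, but the paper's reduction is not that naive move and does succeed term by term. Your geometric argument instead finds the window $[i,i+1]$ in one stroke via the cones spanned by consecutive powers of $\ve$ under $\alpha\mapsto(\alpha,\alpha\pr)$, and recovers integrality of $c,d$ from the fact that $\{\ve^{i},\ve^{i+1}\}$ is simultaneously a $\Z$-basis of $\Z[\ve]$ and an $\R$-basis of $\R^2$; the steps all check out ($\ve\pr=\ve^{-1}$ since $N(\ve)=1$, the arguments $\arctan(\ve^{-2k})$ sweep out $(0,\pi/2)$, and $\ve^{-1}=\mathrm{Tr}(\ve)-\ve\in\Z[\ve]$). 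What your route buys: it is sign-free and in fact proves the stronger statement that $E\setminus\{0\}$ is exactly the set of totally positive elements of $\Z[\ve]$, which the coefficient manipulation does not obviously give. What the paper's route buys: it is purely algebraic and yields the two-term representation by an explicit finite algorithm. Your part (b) is essentially identical to the paper's.
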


\begin{proof}
The proof is similar to the proofs of Lemma 4 and Theorem 1 of \cite{Ki2}, but let us include it for completeness.

Since $N(\ve)=1$, we have $\ve^2=A\ve-1$ with $A=\mathrm{Tr\ }\ve> 2$. By induction, this establishes the following claim: 
for each $\ell\geq 2$ we have
\begin{equation}\label{claim}
\ve^\ell=-1+\sum_{j=1}^{\ell-1} b_j\ve^j \quad \text{with} \quad  b_1>0, \quad   b_j\geq 0 \text{ for } j\geq 2.
\end{equation}
Indeed, if \eqref{claim} holds,  then
$$\ve^{\ell+1}=-\ve+\ve^2+(b_1-1)\ve^2+\sum_{j=2}^{\ell-1} b_j\ve^{j+1}=-1+(A-1)\ve+(b_1-1)\ve^2+\sum_{j=2}^{\ell-1} b_j\ve^{j+1}$$
 as desired.  Let us now rewrite the expression $e=\sum_{i=i_0}^{i_1} e_i\ve^i$ using this claim for $\ell:=i_1-i_0$.
In the case when $e_{i_0}\geq e_{i_1}$, we have
\begin{equation*}
\begin{split}
e& =\sum_{i=i_0}^{i_1-1} e_i\ve^i+e_{i_1}\ve^{i_0+\ell}=
\sum_{i=i_0}^{i_1-1} e_i\ve^i+e_{i_1}\ve^{i_0}\cdot\Bigl(-1+\sum_{j=1}^{\ell-1} b_j\ve^j\Bigr)= \\
& =
(e_{i_0}- e_{i_1})\ve^{i_0}+\sum_{i=i_0+1}^{i_1-1} (e_i+e_{i_1}b_{i-i_0})\ve^i.
\end{split}
\end{equation*}
When $e_{i_0}\leq e_{i_1}$, we similarly have 
$$e=\sum_{i=i_0+1}^{i_1} e_i\ve^i+e_{i_0}\ve^{i_0}\cdot\Bigl(-\ve^{\ell}+\sum_{j=1}^{\ell-1} b_j\ve^j\Bigr)=
(e_{i_1}- e_{i_0})\ve^{i_1}+\sum_{i=i_0+1}^{i_1-1} (e_i+e_{i_0}b_{i-i_0})\ve^i.$$

In both cases we see that the length of the sum for $e$ has decreased while all the coefficients remained non-negative. Continuing by induction, we prove the first part of the lemma.

For the second part, write $e=c\ve^i+d\ve^{i+1}$. By the four square theorem, we have $c=t_1^2+t_2^2+t_3^2+t_4^2$ and $d=t_5^2+t_6^2+t_7^2+t_8^2$  for some integers $t_j$. If $i$ is even, then taking $x_i=t_i\ve^{i/2}$ shows that $e$ is represented by the given form;
if $i$ is odd, one takes $x_i=t_{i+4}\ve^{(i+1)/2}$ for $i=1, \dots, 4$ and $x_i=t_{i-4}\ve^{(i-1)/2}$ for $i=5, \dots, 8$.
\end{proof}

\begin{theorem}\label{construct universal}
The quadratic form $$\sum_{\sigma\in S_0} \sigma(x_{1, \sigma}^2+x_{2, \sigma}^2+x_{3, \sigma}^2+x_{4, \sigma}^2+\ve x_{5, \sigma}^2+\ve x_{6, \sigma}^2+\ve x_{7, \sigma}^2+\ve x_{8, \sigma}^2)$$ is universal and has $8M_D$ variables. 
\end{theorem}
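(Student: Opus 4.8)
The plan is to assemble the two preparatory results—the decomposition of arbitrary totally positive integers into indecomposables grouped modulo units (Proposition~\ref{express using semi-convergents}) and the unit-simplification lemma (Lemma~\ref{kim})—into a single universality statement, and then to count variables. First I would verify universality: given any $\alpha\in\mathcal O_K^+$, Proposition~\ref{express using semi-convergents}(b) writes $\alpha=\sum_{\sigma\in S_0} e_\sigma\sigma$ with each $e_\sigma\in E$. By Lemma~\ref{kim}(b), each $e_\sigma$ is represented by the 8-ary form $y_1^2+\dots+y_4^2+\ve(y_5^2+\dots+y_8^2)$, i.e.\ there are values $x_{j,\sigma}\in\mathcal O_K$ with $e_\sigma = \sum_{j=1}^4 x_{j,\sigma}^2 + \ve\sum_{j=5}^8 x_{j,\sigma}^2$. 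Substituting these into the displayed form block-by-block gives
$$\sum_{\sigma\in S_0}\sigma\Bigl(\sum_{j=1}^4 x_{j,\sigma}^2+\ve\sum_{j=5}^8 x_{j,\sigma}^2\Bigr)=\sum_{\sigma\in S_0}\sigma e_\sigma=\alpha,$$
so the form represents $\alpha$. Since $\alpha$ was arbitrary, the form is universal.

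Next I would confirm it is a bona fide totally positive definite diagonal quadratic form over $K$: each coefficient is either $\sigma$ or $\ve\sigma$ with $\sigma\in S_0\subset\mathcal O_K^+$ and $\ve$ the totally positive fundamental unit, hence each coefficient lies in $\mathcal O_K^+$, as required of a diagonal universal form. Finally the variable count: the form has one block of $8$ variables $x_{1,\sigma},\dots,x_{8,\sigma}$ for each $\sigma\in S_0$, so the total number of variables is $8\,\lvert S_0\rvert$. The excerpt already established (in the paragraph computing the cardinality of $S_0$) that $\lvert S_0\rvert = M_D$, so the form has exactly $8M_D$ variables.

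I do not expect any serious obstacle here, since the theorem is essentially a bookkeeping assembly of the two lemmas that precede it; the only points requiring care are purely formal. One is that the substitution must be justified as producing an honest representation by the \emph{fixed} universal form rather than by $\lvert S_0\rvert$ separate forms—this is immediate because the variables are indexed independently over $\sigma$ and the blocks do not interact. The other, slightly more substantive, point is to make sure the identification $\lvert S_0\rvert=M_D$ is invoked correctly in both parities of $s$; but this is exactly the content of the remark following the definition of $S_0$ in Section~\ref{sec:prep}, which uses $u_s=2u_0$ or $2u_0-1$ and the periodicity $u_{s+i}=u_i$, so I would simply cite that computation. Thus the whole proof reduces to one display exhibiting the representation, one sentence identifying the coefficients as totally positive integers, and one sentence counting variables via $\lvert S_0\rvert=M_D$.
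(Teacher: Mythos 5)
Your proposal is correct and follows exactly the paper's argument: universality comes from combining Proposition~\ref{express using semi-convergents}(b) with Lemma~\ref{kim}(b), and the variable count is $8\lvert S_0\rvert=8M_D$ via the cardinality computation already recorded in Section~\ref{sec:prep}. You have merely written out in more detail the substitution step that the paper leaves implicit.
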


\begin{proof}
The universality of the form follows by Proposition \ref{express using semi-convergents} and Lemma \ref{kim}. Its number of variables is eight times the number of elements of $S_0$.

When $s$ is even, the elements of $S_0$ are of the form $\alpha_i+r\alpha_{i+1}$ with odd $i$, $-1\leq i\leq s-3$ and $0\leq r<u_{i+2}$. The same is true when $s$ is odd, except that we take $-1\leq i\leq 2s-3$, so that $|S_0| = M_D$. 
\end{proof}

Finally, let us note that this result is exactly analogous to Kim's case of $D=n^2-1$ \cite{Ki2}. Then we have $\sqrt D=[n-1, \overline{1, 2(n-1)}]$, and so $s=2$, $u_1=1$, and $S_0=\{1\}$, also obtaining $M_D=1$ and $m_{\text{diag}}(D) \leq 8$.

\section{Lower bound}

Let us now prove a lower bound on the number of variables of a diagonal universal quadratic form. Assume that $Q(x_i)=\sum_{1\leq i\leq m} a_{i}x_i^2$ is a universal totally positive diagonal quadratic form with $a_i\in\mathcal O_K^+$. For every indecomposable element $\sigma$ we know that
$\sigma=Q(x_i)$ for some $x_1, \dots, x_m$, but since $\sigma$ is indecomposable, this is possible only when $\sigma=a_ix_i^2$ for some $i$.
Hence it will be important for us to understand when it can happen that   squares of elements of $\co_K$ are indecomposable.

\begin{lemma}\label{convergent root}
Assume that $\alpha\in\mathcal O_K$ is such that $\alpha^2$ is indecomposable. Then for some $j \geq -1$ we have $ \alpha\in\{ \pm \alpha_j, \pm \alpha_j'\}$. 
\end{lemma}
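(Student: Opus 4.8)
The plan is to show that if $\alpha^2$ is indecomposable, then $\alpha^2$ must be one of the semi-convergents listed in $S$, and then to deduce from its special form (being a perfect square) that $\alpha$ itself is (up to sign) a convergent $\alpha_j$ or a conjugate $\alpha_j'$. The starting point is that, by the classical characterization recalled in the excerpt, $S$ is precisely the set of indecomposable integers, so $\alpha^2 \in S$. Since $S$ is closed under multiplication by $\ve^k$ and every element of $S$ is of the form $\sigma_0 \ve^k$ with $\sigma_0 \in S_0$, I would first reduce to understanding squares among semi-convergents and their conjugates.

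\textbf{Key steps.} First I would observe that we may assume $\alpha$ is totally positive (replacing $\alpha$ by $-\alpha$ if necessary, since $\alpha^2 = (-\alpha)^2$; and if $\alpha$ has mixed signs, then $\alpha^2$ is totally positive but $\alpha\pr$ and $\alpha$ have opposite signs, which I would handle by passing to the conjugate $\alpha'$). The core of the argument is to use the norm. Since $\alpha^2$ is indecomposable, it is a semi-convergent or conjugate thereof, say $\alpha^2 = \alpha_{i,r}$ (up to conjugation and a power of $\ve$). The crucial point is that $N(\alpha^2) = N(\alpha)^2$ is a perfect square, and by Lemma \ref{prop 6} together with the norm formula \eqref{norms}, the norm of a genuine (non-convergent) semi-convergent $\alpha_{i,r}$ with $0 < r < u_{i+2}$ tends to be larger and is generically not of the shape compatible with being a perfect square times the right comparison; in fact I would argue that indecomposables whose norm is a perfect square force $r \in \{0\}$, i.e.\ force $\alpha^2$ to be an actual convergent $\alpha_i$ (or its conjugate) rather than a proper semi-convergent.

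\textbf{From convergent to root.} Once $\alpha^2 = \alpha_i \ve^{2k}$ for a convergent $\alpha_i$ (absorbing the even power of $\ve$ into $\alpha$, using that $\ve$ is a totally positive fundamental unit so $\ve^k \in \co_K^\times$), I would use Proposition \ref{norm implies alpha}: the element $\alpha$ satisfies $|N(\alpha)| = \sqrt{|N(\alpha^2)|} = \sqrt{N_i}$, and by the upper bound $N_i < \sqrt{\Delta}$ from \eqref{4b} we get $|N(\alpha)| < \Delta^{1/4} < \frac12\sqrt{\Delta}$ for $\Delta$ large, so $\alpha$ itself must be of the form $n\alpha_j$ or $n\alpha_j\pr$. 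Then the condition that $\alpha^2$ be \emph{primitive} indecomposable (not divisible by a rational integer) forces $n = \pm 1$, giving $\alpha \in \{\pm\alpha_j, \pm\alpha_j\pr\}$ as claimed.

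\textbf{The main obstacle} I expect is ruling out proper semi-convergents, i.e.\ showing that a square can only be an honest convergent and not a semi-convergent with $0 < r < u_{i+2}$. This requires a careful analysis of the norm formula \eqref{norms} — showing that $\frac{D - (T_{i+1} - N_{i+1}r)^2}{N_{i+1}}$ (and its $D \equiv 1$ analogue) is a perfect square only in the extremal cases — and then matching the primitivity constraint. Handling the small-$\Delta$ cases where the clean inequality $\Delta^{1/4} < \frac12\sqrt\Delta$ fails, by finite checking, is a secondary nuisance.
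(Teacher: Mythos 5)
There is a genuine gap at the step you yourself flag as ``the main obstacle'': you assert, but do not prove, that an indecomposable whose norm is a perfect square must be an honest convergent ($r=0$) rather than a proper semi-convergent. Nothing in the norm formula \eqref{norms} or Lemma \ref{prop 6} rules out the quadratic polynomial $f(r)$ taking a perfect-square value (say $4$ or $9$) at an interior $0<r<u_{i+2}$, so this reduction is not just unjustified but dubious as stated. (Note also that the lemma's conclusion does not require $\alpha^2$ to be a convergent --- e.g.\ $\alpha_j^2$ is typically not one --- so you would be proving something stronger than needed, and possibly false.)

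The detour is in fact unnecessary. The paper's proof bypasses it entirely: by the Dress--Scharlau theorem \cite[Theorem 3]{DS}, \emph{every} indecomposable element has norm at most $\Delta/4$, so $N(\alpha)^2=N(\alpha^2)\leq \Delta/4$ gives $|N(\alpha)|\leq\frac12\sqrt{\Delta}$ directly, without identifying which semi-convergent $\alpha^2$ is; then Proposition \ref{norm implies alpha} yields $\alpha=n\alpha_j$ or $n\alpha_j'$, and primitivity of $\alpha$ (forced by indecomposability of $\alpha^2$, since $\alpha=n\beta$ would give the decomposition $\alpha^2=\beta^2+(n^2-1)\beta^2$) pins down $n=\pm1$. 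Your final two steps --- invoking Proposition \ref{norm implies alpha} on $\alpha$ and using primitivity to force $n=\pm1$ --- are exactly right and coincide with the paper's; what is missing is the uniform norm bound for indecomposables that makes the first half of your argument superfluous. As written, your proof is incomplete, and even if the convergent-reduction could be salvaged, the inequality $\Delta^{1/4}<\frac12\sqrt{\Delta}$ you rely on fails for $\Delta\leq 16$ and would need the separate finite check you mention.
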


\begin{proof} By the theorem of Dress-Scharlau \cite[Theorem 3]{DS} we know  
that $N(\alpha^2)\leq \Delta/4$, and so $|N(\alpha)|\leq \frac{1}{2}\sqrt \Delta$. Since $\alpha$ must obviously be primitive, the claim follows from Proposition  \ref{norm implies alpha}.
\end{proof}

Using the previous lemma, we are now ready to prove a lower bound on the number of variables.

\begin{theorem}\label{lower bound}
Every diagonal universal totally positive quadratic form over $\mathcal O_K$ needs at least $M_D/s$ variables if $s$ is even and $M_D/2s$ if $s$ is odd.
\end{theorem}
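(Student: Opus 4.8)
The plan is to count, from below, how many distinct coefficients $a_i$ a universal diagonal form must have. Since $Q$ is universal, every indecomposable $\sigma \in S$ must be represented, and by indecomposability the only way to do this is $\sigma = a_i x_i^2$ for some index $i$ and some $x_i \in \mathcal{O}_K$. Thus each $\sigma \in S$ forces the existence of an $a_i$ and an element $x_i$ with $a_i x_i^2 = \sigma$. The key structural input is Lemma \ref{convergent root}: if $x_i^2$ divides an indecomposable (equivalently $a_i x_i^2 = \sigma$ is indecomposable), then $x_i$ must itself be $\pm\alpha_j$ or $\pm\alpha_j'$ for some convergent. So I would first argue that for each $\sigma \in S$ arising this way, the root $x_i$ lies in the restricted set of convergents.

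**Bounding the multiplicity.** The heart of the argument is to show that a single coefficient $a_i$ cannot ``absorb'' too many elements of $S_0$. Recall that $S_0$ has exactly $M_D$ elements, and every element of $S$ is $\sigma_0 \varepsilon^k$ for a unique $\sigma_0 \in S_0$. I would fix a coefficient $a = a_i$ and ask: how many $\sigma_0 \in S_0$ can be written as $a x^2$ with $x$ a convergent (up to sign and conjugation)? The point is that if $a x^2 = \sigma_0$ and $a y^2 = \tau_0$ are two such representations with $\sigma_0, \tau_0 \in S_0$, then $\sigma_0/\tau_0 = (x/y)^2$ is a square of a totally positive unit times a ratio of convergents; using that convergents $\alpha_j$ modulo units are parametrized by $j$ running over one period (length $s$, or $2s$ when $s$ is odd because the totally positive unit is $\alpha_{2s-1}$), I expect that the number of $\sigma_0 \in S_0$ a single $a$ can produce is bounded by the number of relevant convergents in a period, namely $s$ (resp.\ $2s$). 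Concretely, writing $\sigma_0 = a\,(\pm\alpha_j^{(\prime)})^2$ and using that multiplication by $\varepsilon$ permutes the convergents with period $s$ or $2s$, each residue class of $j$ modulo the period gives at most one $\sigma_0 \in S_0$ for fixed $a$.

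**Combining into the count.** Once each coefficient accounts for at most $s$ (resp.\ $2s$) elements of $S_0$, and all $M_D = |S_0|$ indecomposables-mod-units must be covered, I would conclude
\[
m \geq \frac{M_D}{s} \quad (s \text{ even}), \qquad m \geq \frac{M_D}{2s} \quad (s \text{ odd}),
\]
which is exactly the claim. The reduction to counting $S_0$ rather than all of $S$ is legitimate because once $\sigma = a_i x_i^2$ is representable, so is $\sigma \varepsilon^{2k} = a_i (x_i \varepsilon^k)^2$ for all $k$, so it suffices to represent one element in each $S_0$-class, and the factor $\varepsilon^2$ (not $\varepsilon$) is why the totally positive unit and hence the period $2s$ for odd $s$ enters.

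**Main obstacle.** The delicate step is the multiplicity bound: showing rigorously that a fixed $a$ yields at most $s$ (or $2s$) elements of $S_0$ via $\sigma_0 = a x^2$. This requires pinning down precisely how the convergents $\alpha_j$ behave modulo the totally positive unit $\varepsilon$ and squares, and ensuring that two convergents $\alpha_j, \alpha_{j'}$ with $j \equiv j' \pmod{s}$ (resp.\ $\bmod\ 2s$) give $S_0$-representatives that coincide. I would expect to use the recursion $\alpha_{i+1} = \alpha_i q_i + \alpha_{i-1}$ together with $\varepsilon = \alpha_{s-1}$ or $\alpha_{2s-1}$ to make this periodicity explicit, and this bookkeeping — rather than any deep new idea — is where the real care is needed.
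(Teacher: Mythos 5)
Your overall strategy coincides with the paper's: every indecomposable must be represented as $a_i x_i^2$, Lemma \ref{convergent root} forces $x_i \in \{\pm\alpha_j, \pm\alpha_j'\}$, and one then bounds how many indecomposables in a fundamental domain a single coefficient can absorb. However, the step you yourself defer as the ``main obstacle'' --- the multiplicity bound --- is exactly where the content of the proof lies, and as sketched your count is too weak by a factor of $2$. For a fixed coefficient $a$, each residue class of $j$ modulo the period can a priori contribute \emph{two} elements of $S_0$, one of the form $a\alpha_j^2$ and one of the form $a(\alpha_k')^2$; nothing in your argument excludes the conjugate convergents, so the multiplicity per coefficient is only bounded by $2s$ (resp.\ $4s$), which yields $m \geq M_D/2s$ for even $s$ and $m \geq M_D/4s$ for odd $s$ --- half of what is claimed.

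The ingredient you are missing is a normalization of the coefficients. Since multiplying a variable by a power of the unit $\alpha_{s-1}$ is an invertible substitution, one may assume $0 < a_i < a_i' < \alpha_{s-1}^2$ for every $i$. Working in the fundamental domain $T = \{\sigma \in S : \alpha_{2s-1} > \sigma \geq \sigma' > 0\}$ (of cardinality $2M_D$ for even $s$ and $M_D$ for odd $s$), one then derives from $\sigma = a_{i(\sigma)}\alpha(\sigma)^2$ the two inequalities
$$|\alpha(\sigma)| = \bigl(\sigma/a_{i(\sigma)}\bigr)^{1/2} > \bigl(\sigma'/a'_{i(\sigma)}\bigr)^{1/2} = |\alpha(\sigma)'| \quad \text{and} \quad |\alpha(\sigma)| < \alpha_{s-1}^2 = \alpha_{2s-1}.$$
The first inequality rules out $\alpha(\sigma) = \pm\alpha_j'$ (convergents satisfy $|\alpha_j| > |\alpha_j'|$ for $j \geq 0$), and the second confines the index to $-1 \leq j \leq 2s-2$, so that $\#T \leq 2sm$ follows at once, with no need for the bookkeeping on the action of $\varepsilon$ on convergent indices that you anticipate. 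Until you supply this (or some other) mechanism for eliminating the conjugates, your argument proves only half of the stated bound.
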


\begin{proof}
Let $T=S_0$ if $s$ is odd and $T=\{\sigma\in S \mid  \ve^2>\sigma\geq\sigma\pr>0\}$ if $s$ is even, i.e., we can describe $T$ uniformly as $T=\{\sigma\in S \mid \alpha_{s-1}^2=\alpha_{2s-1}>\sigma\geq\sigma\pr>0\}$. Thus $T$ has cardinality $2M_D$ if $s$ is even and $M_D$ if $s$ is odd. 

Note that if $1\neq\sigma=x-y\omega_D\pr\in T$, then $x, y> 0$ and $\sigma> 1> \sigma\pr$.

As before, let $Q(x_i)=\sum_{1\leq i\leq m} a_{i}x_i^2$ be a universal form with $a_i\in\mathcal O_K^+$. Multiplying any of the variables by a power of the unit $\alpha_{s-1}$ is an invertible substitution, and so we can assume without loss of generality that the coefficients $a_i$ satisfy $0<a_i<a_i\pr<\alpha_{s-1}^2$.

The only way in which $Q$ can represent an indecomposable $\sigma$ is when all but one of the variables $x_i$  are zero. 
Hence for each $\sigma\in T$ we have $\sigma=a_{i(\sigma)}\alpha(\sigma)^2$ for some index $1\leq i(\sigma)\leq m$ and some $\alpha(\sigma)\in\mathcal O_K$, which must be indecomposable. By  Lemma \ref{convergent root}, $\alpha(\sigma)$ is, up to sign, a convergent $\alpha_j$ or its conjugate $\alpha_j'$ for some $j$. Moreover, we have the two bounds 
 $$|\alpha(\sigma)|=(\sigma /a_{i(\sigma)})^{1/2}  \begin{cases} < \alpha_{s-1} \cdot (a'_{i(\sigma)})^{1/2} < \alpha_{s-1}^2,  \\  > (\sigma'/a'_{i(\sigma)})^{1/2}  = |\alpha(\sigma)\pr|. \end{cases} $$ In particular, this excludes the possibility $\alpha(\sigma) = \pm \alpha_j'$ and forces $\alpha(\sigma) =\pm  \alpha_j$ with  $-1\leq j\leq 2s-2$. 

Since all elements of $T$ are represented by $Q$, we have $T\subset\{a_i\alpha_j^2\mid  1\leq i\leq m, -1\leq j\leq 2s-2\}$. Comparing cardinalities, we get $\# T\leq m\cdot 2s$, finishing the proof.
\end{proof}

Comparing this bound on the number of variables with Theorem \ref{construct universal}, we see that it is asymptotically good when $s$ is small. This of course happens only rarely, and so it would be very interesting to be able to replace $1/s$ in Theorem \ref{lower bound} by an absolute constant.

This is perhaps hard, but we can almost achieve this, at the cost of having to consider the sum $M^*_{D, \varepsilon}$ of sufficiently large coefficients $u_j$ instead of $M_D$, as we are now going to prove.

\medskip

Let us first consider in more detail the set of semi-convergents $\alpha_{i, r}$ with fixed odd $i$ and varying $r$. Their number is $u_{i+2}\asymp \sqrt D/N_{i+1}$, and so if $N_{i+1}$ is small, there are many of them. 
Let $N:=N_{i+1}$, $T:=T_{i+1}$ and $u:= u_{i+2}$ so that
$$N(\alpha_{i,r}) = \begin{cases} \frac{D-(T-Nr)^2}N, & D \equiv 2, 3\, (\text{mod }4),\\  \frac{(D-1)/4+(T-Nr)-(T-Nr)^2}N, & D \equiv 1 \, (\text{mod 4})\end{cases}$$
by \eqref{norms}. 
In either case, we view this as a quadratic polynomial $f(r)$ and note that from the definition of $\alpha_{i,r}$ as (totally positive) indecomposables we have
$$\mathbb{Z} \cap [0, u] = \{n \in \mathbb{Z} \mid f(n) > 0\}$$
(using the estimate for $T$ from \eqref{lemma3}, this also corresponds to $\left|\frac TN-\frac u2\right|$ being small).
We define the multiplicative arithmetic function $\rho_f(d) = \{n \, (\text{{\rm mod }} d) \mid f(n) \equiv 0 \, (\text{{\rm mod }} d)\}$. An application of Hensel's lemma shows 
\begin{equation}\label{hensel}
  \rho_f(p^k) \leq 2
 \end{equation} 
   for all primes $p$ and all $k \geq 2$:

We verify this first in the case $f(x) = \frac{D-(T-Nx)^2}N=\frac{D-T^2}N+2Tx-Nx^2$. 

If $p\mid D$, then $D\equiv (T-Nx)^2\pmod {p^2}$ has no solution since $D$ is squarefree, i.e., $\rho(p^k)=0$ for all $k\geq 2$.

If $p\nmid D$ and $p\nmid N$, then $N$ is invertible modulo $p$, and so $\rho(p) \leq 2$ and if $p\neq 2$, we can use Hensel's lemma to conclude $\rho(p^k) \leq 2$ for every $k \geq 1$. If $p=2$, then we see directly that $\rho(2^k)=0$ for all $k\geq 2$, since $D\equiv 2,3\not\equiv 1\equiv (T-Nx)^2 \pmod 4$.

If $p\nmid 4D$ and $p\mid N$, then $f(x)\pmod p$ is linear or constant, and so $\rho(p)\leq 1$ and $\rho(p^k) \leq 1$ for every $k \geq 1$ by Hensel's lemma.

Finally, if $p=2\nmid D$ and $2\mid N$, then $f(x)\equiv \frac{D-T^2}N\pmod 2$. Since $D\equiv 2, 3\pmod 4$, we have  $4\nmid D-T^2$ and we see that $\rho(2^k)=0$ for all $k\geq 1$ in this case.

\medskip

In the second case we have $f(x)=\frac{D-\left(2(T-Nx)-1\right)^2}{4N}=\frac{D-(2T-1)^2}{4N}+(2T-1)x-Nx^2$. The discussion is similar to the previous one:

If $p\mid D$, then again $D\equiv\left(2(T-Nx)-1\right)^2\pmod {p^2}$ has no solution, i.e., $\rho(p^k)=0$ for all $k\geq 2$.

If $p\nmid D$ and $p\nmid N$, then $N$ is invertible modulo $p$, and so $\rho(p) \leq 2$ and if $p\neq 2$, we can use Hensel's lemma to conclude $\rho(p^k) \leq 2$ for every $k \geq 1$. For $p=2$ we have that $f(x)$ is constant modulo 2 and non-constant modulo 4, and so
$\rho(2^k)\leq 2$ for $k\geq 2$.

If $p\nmid 4D$ and $p\mid N$, then $f(x)\pmod p$ is linear or constant, and so $\rho(p)\leq 1$ and $\rho(p^k) \leq 1$ for every $k \geq 1$ by Hensel's lemma.

If $p=2\nmid D$ and $2\mid N$, then $f(x)\equiv \frac{D-(2T-1)^2}{4N}-x\pmod 2$, and so $\rho(2)=1$. Distinguishing some more cases, one gets $\rho(4)=1$ and $\rho(2^k)\leq 2$ for $k\geq 3$.

\medskip

Having established \eqref{hensel}, we can now prove  the following lemma. 
\begin{lemma} Let $f, T, N, u$ be as above. Let $k \in 2\mathbb{N}$ and $1 \leq X \leq u$. Then
$$\mathcal{S} := \# \{n \leq X \mid f(n) \text{ is $k$-th power free}\}  = C_{k, f} X + O\left( YX^{\varepsilon} + \frac{X}{Y^{k-1}} +   \frac{D^{1+\varepsilon}}{NY^k}\right)$$
for any $\varepsilon > 0$, where $C_{k, f} \geq 1/\zeta(k)^3$, and $1 \leq Y \leq X$ can be chosen arbitrarily. 
\end{lemma}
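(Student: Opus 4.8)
The plan is to detect $k$-th-power-freeness by M\"obius inversion and then run a standard squarefree-type sieve, the only non-routine inputs being the Hensel bound \eqref{hensel} and the a priori size bound on $f$. Writing $\mathbf{1}_{k\text{-free}}(m)=\sum_{d^k\mid m}\mu(d)$ for a positive integer $m$, and noting that $f(n)$ is a genuine positive integer for every $n\le X\le u$ (these are exactly the norms of the totally positive indecomposables $\alpha_{i,n}$), I would interchange the order of summation to obtain
$$\mathcal{S}=\sum_{n\le X}\sum_{d^k\mid f(n)}\mu(d)=\sum_{d}\mu(d)\,A_d,\qquad A_d:=\#\{n\le X\mid f(n)\equiv 0\ (\mathrm{mod}\ d^k)\}.$$
Counting in complete residue classes gives $A_d=\rho_f(d^k)\,X/d^k+O(\rho_f(d^k))$, while $0<f(n)\le D/N$ on the summation range forces $A_d=0$ as soon as $d>(D/N)^{1/k}$. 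Since $\rho_f$ is multiplicative and $\rho_f(p^k)\le 2$ by \eqref{hensel}, we have $\rho_f(d^k)\le 2^{\omega(d)}\ll_\varepsilon d^\varepsilon$.

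Next I would split the range of $d$ at the free parameter $Y$. For $d\le Y$ the rounding errors contribute $\sum_{d\le Y}\rho_f(d^k)\ll Y^{1+\varepsilon}\ll YX^\varepsilon$ (using $Y\le X$), which is the first error term, and the main terms sum to $X\sum_{d\le Y}\mu(d)\rho_f(d^k)/d^k$. Completing this to an infinite series identifies the leading constant as the Euler product
$$C_{k,f}=\sum_{d}\frac{\mu(d)\rho_f(d^k)}{d^k}=\prod_p\Bigl(1-\frac{\rho_f(p^k)}{p^k}\Bigr),$$
and the omitted tail is $X\sum_{d>Y}\rho_f(d^k)/d^k\ll X\sum_{d>Y}d^{\varepsilon-k}\ll X/Y^{k-1}$, the second error term. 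The lower bound $C_{k,f}\ge\zeta(k)^{-3}$ then follows factor by factor from $\rho_f(p^k)\le 2$ combined with the elementary inequality $1-2t\ge(1-t)^3$, valid for $0\le t\le 1/4$, applied with $t=p^{-k}\le 1/4$ (as $p^k\ge 4$): indeed $1-\rho_f(p^k)/p^k\ge 1-2/p^k\ge(1-1/p^k)^3$, and multiplying over $p$ yields $C_{k,f}\ge\prod_p(1-p^{-k})^3=\zeta(k)^{-3}$.

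The remaining range $Y<d\le(D/N)^{1/k}$ is where I expect the main difficulty to lie: bounding $|\sum_{d>Y}\mu(d)A_d|$ by $\sum_{d>Y}A_d$ and merely completing the Dirichlet series is too lossy, and the crude rounding contribution $\sum_{d}\rho_f(d^k)$ over this range is also unacceptable. Instead I would use the explicit shape of $f$: it is a quadratic polynomial, hence assumes each value at most twice, and on the relevant range its values lie in $(0,D/N]$. Thus $A_d$ is controlled by the number of multiples of $d^k$ below $D/N$ (each pulled back by at most two integers $n$), and summing these size-aware bounds over $d>Y$, again with $\rho_f(d^k)\ll_\varepsilon d^\varepsilon$, produces the third error term $D^{1+\varepsilon}/(NY^k)$. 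Collecting the three contributions gives the asserted formula. The delicate point—and the step I would expect to cost the most care—is precisely this large-modulus range: extracting the clean power of $Y$ requires balancing the residue-class bound $A_d\ll d^\varepsilon(X/d^k+1)$ against the value-counting bound coming from $f(n)\le D/N$, rather than using either in isolation. The evenness of $k$ and total positivity enter only through ensuring that $f(n)$ is a genuine positive integer, so that the power-free M\"obius identity applies throughout.
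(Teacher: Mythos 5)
Your setup --- the M\"obius identity, the split at $Y$, the residue-class count for $d\le Y$, the completion of the singular series, and the bound $C_{k,f}\ge \zeta(k)^{-3}$ via $1-2/p^k\ge (1-1/p^k)^3$ --- all match the paper's proof. The gap is exactly where you predicted it: the range $d>Y$. Your proposed estimate there does not reach the stated error term. Counting multiples of $d^k$ in $(0,D/N]$ and pulling each back through the (at most two-to-one) quadratic map gives $A_d\ll D/(Nd^k)$, whence $\sum_{d>Y}A_d\ll D/(NY^{k-1})$ --- a full factor of $Y$ (up to $D^{\varepsilon}$) larger than the claimed $D^{1+\varepsilon}/(NY^k)$. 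Balancing against $A_d\ll d^{\varepsilon}(X/d^k+1)$ does not rescue this: when $Y^k>X$ the residue-class bound degenerates to $A_d\ll d^{\varepsilon}$, and summing either bound, or their minimum, over $Y<d\le (D/N)^{1/k}$ still exceeds $D^{1+\varepsilon}/(NY^k)$ in the regime that matters (e.g.\ $N$ near $D^{3/8}$, $Y=u^{1-2\varepsilon}$, $k=4$). With your weaker error term the application in Theorem \ref{lower C} would only go through for $u_{i+2}\gg D^{1/6+\varepsilon}$ rather than $D^{1/8+\varepsilon}$.

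The missing idea is to organize the tail by the cofactor $m=f(n)/d^k$ rather than by the modulus $d$. Since $0<f(n)\le D/N$ and $d>Y$, one has $1\le m\le D/(NY^k)$, and for fixed $m$ the condition $f(n)=d^k m$ rewrites as $mN(d^{k/2})^2+(T-Nn)^2=D$ (resp.\ $4mN(d^{k/2})^2+(2(T-Nn)-1)^2=D$ when $D\equiv 1\bmod 4$): a representation of $D$ by a positive definite binary quadratic form. This has $O(\tau(D))=O(D^{\varepsilon})$ solutions $(d,n)$, since such representations correspond to principal ideals of norm $D$ in $\mathbb{Q}(\sqrt{-mN})$ up to the at most $6$ units of that field. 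Summing over $m$ then gives $O(D^{1+\varepsilon}/(NY^k))$. Note that this is where the hypothesis $k\in 2\mathbb{N}$ is actually used --- to write $d^k=(d^{k/2})^2$ --- and not, as your closing remark suggests, merely to guarantee positivity of $f(n)$, which comes from the total positivity of the $\alpha_{i,r}$ independently of $k$.
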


\begin{proof}  For $1 \leq Y \leq X$ we have
\begin{equation}\label{start}
\mathcal{S} =   \sum_{n \leq X} \sum_{d^k \mid f(n)} \mu(d) = \sum_{d \leq Y} \mu(d) \sum_{\substack{n \leq X\\ d^k \mid f(n)}} 1 + \sum_{\substack{n \leq X\\ f(n) = d^k m\\ d > Y}} \mu(d).
\end{equation}
The first term on the right hand side equals
$$ \sum_{d \leq Y} \mu(d) \left(\frac{\rho_f(d^k)}{d^k} X + O(X^{\varepsilon})\right) = C_{k, f} X + O(YX^{\varepsilon} + Y^{-k+1}X)$$
with
$$C_{k, f} = \sum_{d =1}^{\infty} \mu(d)  \frac{\rho_f(d^k)}{d^k} = \prod_{p} \left(1 - \frac{\rho_f(p^k)}{p^k}\right) \geq \prod_{p} \left( 1 - \frac{2}{p^k}\right) \geq \frac{1}{\zeta(k)^3}.$$
For the second term, we observe that  $0  < f(n) \leq  D/N$ for $1 \leq n \leq X$, so that     $0 < m \leq D/(NY^k)$. The summation condition $f(n) = d^km$ is equivalent to the   equation $(d^{k/2})^2mN + (T - Nn)^2 = D$ resp.\ $4(d^{k/2})^2mN + (2(T-Nn) - 1)^2 = D$. For given $m, N, T, D$, this has at most $6\tau(D)$ solutions $(d, n)$, since the imaginary quadratic number field $\mathbb{Q}(\sqrt{-mN})$ can have at most 6 units and the number of principal ideals of norm $D$ is at most $\tau(D) \ll D^{\varepsilon}$.   Hence we can bound the second term in \eqref{start} by $O(D^{1+\varepsilon} Y^{-k}N^{-1})$. \end{proof}

Now we are ready to bound the minimal number of variables from below in terms of the sum $M^*_{D, \varepsilon}$ of coefficients $u_i\geq D^{1/8+\varepsilon}$,
defined immediately before Theorem \ref{main 1}.

\begin{theorem}\label{lower C}
We have $$m_{\text{{\rm diag}}}(D) \gg M^*_{D, \varepsilon},$$ 
where the implicit constant depends only on $\varepsilon>0$.
\end{theorem}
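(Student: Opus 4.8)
The plan is to refine the counting argument of Theorem~\ref{lower bound}, discarding its wasteful factor $s$ at the price of restricting to large partial quotients. As there, fix a universal diagonal form $Q(x)=\sum_{i\le m}a_ix_i^2$ with coefficients normalized so that $0<a_i<a_i'<\alpha_{s-1}^2$. Every indecomposable $\sigma$ then satisfies $\sigma=a_i\alpha^2$ for a single index $i$ and some $\alpha\in\mathcal O_K$ with $\alpha^2$ indecomposable; by Lemma~\ref{convergent root} we may write $\alpha=\pm\alpha_j$ for a convergent, and (as in the proof of Theorem~\ref{lower bound}) the normalization excludes the conjugate $\alpha_j'$. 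Note also that $a_i$ is itself indecomposable: if $a_i=\beta+\gamma$ with $\beta,\gamma\succ0$, then $\sigma=\beta\alpha_j^2+\gamma\alpha_j^2$ would decompose $\sigma$. Taking norms yields the basic relation $N(\sigma)=N(a_i)\,N_j^2$, where $N_j=|N(\alpha_j)|$.

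Next I would filter by power-freeness. Fix $k=4$ and, for each odd level $i$ with $u_{i+2}\ge D^{1/8+\varepsilon}$, let $\Sigma_i$ be the set of semi-convergents $\alpha_{i,r}$ whose norm $f(r)=N(\alpha_{i,r})$ from \eqref{norms} is $4$th-power free, and put $\Sigma=\bigcup_i\Sigma_i$. Applying the counting lemma above with $X=u_{i+2}$ and $Y\asymp D^{1/8+\varepsilon/4}$, the three error terms $YX^\varepsilon$, $X/Y^{k-1}$, $D^{1+\varepsilon}/(NY^k)$ are all $\ll u_{i+2}$ precisely because $u_{i+2}\ge D^{1/8+\varepsilon}$ and $N=N_{i+1}\asymp\sqrt\Delta/u_{i+2}$ by \eqref{4b}; since the main term is $\ge u_{i+2}/\zeta(4)^3$, we get $\#\Sigma_i\gg u_{i+2}$ and hence $\#\Sigma\gg M^*_{D,\varepsilon}$. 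The exponent $1/8$ is forced here: balancing the third error term against the main term needs $Y^k\gg\sqrt D$, while $Y\ll X^{1-\varepsilon}$ gives $X\gg D^{1/(2k)}$, which is $D^{1/8}$ exactly for $k=4$. For $\sigma\in\Sigma$, $4$th-power freeness of $N(\sigma)=N(a_i)N_j^2$ forces $N_j$ to be squarefree.

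The decisive step is to bound the fibres of the map $\Sigma\to\{a_i\}$, $\sigma\mapsto a_{i(\sigma)}$. Within a fixed level $i$, the equation $\alpha_{i,r}=a\alpha_j^2$ reads $f(r)=N(a)y^2$ with $y=N_j$, i.e. $(T-Nr)^2+N\,N(a)\,y^2=D$ (respectively $(2(T-Nr)-1)^2+4N\,N(a)\,y^2=D$); exactly as in the proof of the counting lemma, the number of solutions $(r,y)$ is at most $6\tau(D)\ll D^\varepsilon$, since they parametrize principal ideals of norm $D$ in $\mathbb Q(\sqrt{-N\,N(a)})$. Thus at each level at most $\ll D^\varepsilon$ values of $r$ share a coefficient, so level $i$ alone forces $\gg u_{i+2}D^{-\varepsilon}$ distinct coefficients. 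To assemble these into the sum $M^*_{D,\varepsilon}$ one must stop a single coefficient from being reused across many levels; here I would use that $\sigma\in S_0$ gives $0<\sigma'<1$, whence $a_i'=\sigma'/(\alpha_j')^2=\sigma'\,\alpha_j^2/N_j^2$, so the conjugate size of the coefficient grows with the level of $\alpha_j$. This confines each normalized coefficient to a bounded band of levels and makes $\sigma\mapsto a_{i(\sigma)}$ at most $\ll D^\varepsilon$-to-one on all of $\Sigma$.

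Combining the two estimates gives $m\ge\#\Sigma/\sup_a\#(\text{fibre})\gg M^*_{D,\varepsilon}\,D^{-\varepsilon}$, and absorbing the harmless $D^{-\varepsilon}$ by relabelling $\varepsilon$ yields $m_{\mathrm{diag}}(D)\gg M^*_{D,\varepsilon}$. The genuine obstacle is this final multiplicity bound: the crude injection of Theorem~\ref{lower bound} allows up to $\sim s$ semi-convergents to share a coefficient, and it is exactly the squarefreeness of $N_j$ (from the $4$th-power-free filtration) together with the level-dependence of the conjugate size $a_i'$ that must be leveraged to collapse this $s$ to $D^\varepsilon$. I expect the careful bookkeeping of this cross-level (near-)disjointness, rather than the analytic input already supplied by the counting lemma, to be the crux of the proof.
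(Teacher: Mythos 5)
Your analytic core --- applying the $k$-th-power-free counting lemma with $k=4$ and $X=u_{i+2}$ separately at each level with $u_{i+2}\geq D^{1/8+\varepsilon}$, and noting that the exponent $1/8$ is forced by balancing $D/(NY^k)$ against $X$ --- is exactly the paper's route. (One quantitative slip there: with your choice $Y\asymp D^{1/8+\varepsilon/4}$ the third error term $D^{1+\varepsilon}/(NY^4)$ equals $D^{1/2}/N$, which by \eqref{4b} is of the \emph{same order} as the main term $\gg u_{i+2}$, so positivity of $\mathcal S$ does not follow; you need $Y^4$ to beat $\sqrt D$ by a genuine power, as with the paper's choice $Y=u^{1-2\varepsilon}$, and you must keep the lemma's $\varepsilon$ separate from the one in $M^*_{D,\varepsilon}$.)

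The genuine gap is in the combinatorial step, where you diverge from the paper. The paper concludes that on the set of $\alpha_{i,r}$ with $4$th-power-free norm the assignment $(i,r)\mapsto j(\alpha_{i,r})$ is injective, so no multiplicity is lost. You instead aim for a fibre bound of size $D^{\varepsilon}$, and this fails on two counts. First, the cross-level control is not established: from $a'=\sigma'\alpha_j^2/N_j^2$ and the normalization $a'<\alpha_{s-1}^2$ you only get $\alpha_j^2\lesssim \alpha_{s-1}^2N_j^2/\sigma'$, and since $\sigma'$ can be as small as $N(\sigma)/\sigma$ with $\sigma$ comparable to a unit, this confines $j$ to a window of length $\asymp s$ (exactly the range $-1\leq j\leq 2s-2$ already used in Theorem~\ref{lower bound}), not to a bounded band; so the step you yourself identify as the crux is not proved. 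Second, and independently, even granting the fibre bound $\ll D^{\varepsilon}$, the resulting estimate $m\gg M^*_{D,\varepsilon}D^{-\varepsilon}$ cannot be upgraded to $m\gg M^*_{D,\varepsilon}$ by ``relabelling $\varepsilon$'': the parameter $\varepsilon$ in $M^*_{D,\varepsilon}$ is a threshold exponent selecting \emph{which} coefficients $u_i$ are summed, not a power saving in the size of the sum. If, say, $M^*_{D,\varepsilon}\asymp\sqrt D$ for all small $\varepsilon$, then $M^*_{D,\varepsilon}D^{-\varepsilon}=o\bigl(M^*_{D,\varepsilon'}\bigr)$ for every $\varepsilon'>0$, so your conclusion is strictly weaker than the theorem. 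To close the argument one must eliminate the $D^{\varepsilon}$ multiplicity entirely, which is precisely what the paper's $4$th-power-freeness criterion is used for.
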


\begin{proof}
Let's consider the (pairwise distinct) indecomposables $\alpha_{i, r}$ with odd $i$ and $0\leq r< u$.
Arguing similarly as in the proof of Theorem \ref{lower bound}, each of them has to be of the form $\sigma=a_{j(\sigma)}\alpha(\sigma)^2$, where $a_j$ are the coefficients of a universal form. 
Hence it suffices to show that for a positive density of suitable values of $i, r$  we have that the elements $\alpha_{i, r}$ have 4th-power-free norms (because then the indices $j(\alpha_{i,r})$ must be different for different values of $i, r$). Note that for $-1\leq i\leq 2s-3$, 
the indecomposables $\alpha_{i, r}$ are pairwise distinct modulo squares of units, and so we can consider the cases of different $i$'s separately and then just add them together.

Hence let $i$ be odd. As before we write $u:=u_{i+2}$, $N:=N_{i+1}$, and $T:=T_{i+1}$. 
We   apply the previous lemma  with $X = u$, $Y = u^{1-2\varepsilon}$ and $k=4$.  Provided $N \leq D^{3/8 - \varepsilon}$ (for $\varepsilon > 0$ sufficiently small and $D$ sufficiently large) we can conclude from \eqref{4b}   
that  $u \asymp D^{1/2}N^{-1}$ and $$\mathcal{S} \gg u + O(u^{1-\varepsilon} + D^{1+\varepsilon} N^{-1} u^{-4+8\varepsilon}) = u + O\left(u^{1-\varepsilon}(1 + N^{4-9\varepsilon} D^{-\frac{3}{2} + \frac{11}{2}\varepsilon})\right)    
\gg u$$
 as desired. 
\end{proof}

\section{Analytic preparation}

We now employ analytic methods to prove Theorems \ref{main 1a} and \ref{main 2}. 
We denote by $\mathcal{C}$ the class group of $K$ (of order $h$) and by 
 $\widehat{\mathcal{C}}$ the corresponding  character group. 
 For each $\chi \in \widehat{\mathcal{C}}$ we denote by $L_K(s, \chi)$ the corresponding class group $L$-function. It is entire except if $\chi = \chi_0$ is the trivial character, in which case $L_K(s, \chi_0) = \zeta(s) L(s, \chi_{\Delta})$  has a simple pole at $s=1$ with residue $L(1, \chi_{\Delta})$, We have the important subconvexity bound 
\begin{equation}\label{subconvex}
L_K(1/2 + it, \chi) \ll (1+ |t|)^A D^{\frac{1}{4} - \delta}
\end{equation}
where $A, \delta > 0$ are some absolute constants. The first such result was obtained in \cite{DFI}, the best value of $\delta = 1/1889$ is due to \cite{BHM}.  
We write
\begin{displaymath}
\begin{split}
L^{\ast}_K(s, \chi) & := \sum_{\mathfrak{a} \text{ primitive}} \frac{\chi(\mathfrak{a})}{(N\mathfrak{a})^s} = \prod_{p = \mathfrak{p} \bar{\mathfrak{p}} \text{ split}} \left(1 + \sum_{n=1}^{\infty} \frac{2\Re \chi(\mathfrak{p})}{p^{ns}}\right) \prod_{p = \mathfrak{p}^2 \text{ ramified} }\left(1 - \frac{\chi(\mathfrak{p})}{p^s}\right)^{-1}\\
& = L_K(s, \chi) \zeta^{(\Delta)}(2s)^{-1}
\end{split}
\end{displaymath}
where the superscript $(\Delta)$ denotes the removal of Euler factors at primes dividing $\Delta$. 

 For $s > 1$ we have
$$\frac{1}{h}\sum_{\chi \not= \chi_0} L_K(s, \chi) =\zeta(s, \text{princ})- \frac{1}{h} \zeta(s) L(s, \chi_{\Delta}),$$
where
$$\zeta(s,   \text{princ}) = \sum_{\mathfrak{a} \text{ principal}} \frac{1}{(N\mathfrak{a})^s}$$
is the zeta-function associated to the class of principal ideal. If we   write
\begin{equation}\label{LD}
L(D) = \lim_{s \rightarrow 1} \left[\zeta(s,   \text{princ}) - \frac{L(1, \chi_{\Delta})/h}{s-1}\right],
\end{equation}
then comparing Taylor coefficients in the equation
$$\frac{1}{h} \sum_{\chi \not= \chi_0} L_K(s, \chi) = \zeta(s, \text{princ}) - \frac{1}{h} L(s, \chi_{\Delta}) \zeta(s)$$
we obtain
\begin{equation*} 
\frac{1}{h}\sum_{\chi \not= \chi_0} L_K(1, \chi) = L(D) - \frac{\gamma L(1, \chi_{\Delta}) + L'(1, \chi_{\Delta})}{h},
\end{equation*}
where $\gamma = 0.577\ldots$ denotes Euler's constant, and so 
\begin{equation}\label{res}
\frac{1}{h}\sum_{ \chi \not= \chi_0} L^{\ast} _K(1, \chi) = \frac{1}{\zeta^{(\Delta)}(2)} \left(L(D) - \frac{\gamma L(1, \chi_{\Delta}) + L'(1, \chi_{\Delta})}{h}\right). 
\end{equation}
Explicit, but very complicated expressions for $L(D)$ have  been obtained by Hecke \cite{Heck}, Herglotz \cite{Her}, and Zagier \cite{Za} and can be regarded as a real-quadratic analogue of Kronecker's limit formula.

In the following we fix a smooth function $w: [0, \infty) \rightarrow [0, 1]$  that is 1 on $[0, 1]$ and 0 on $[2, \infty)$. Its Mellin transform $\widehat{w}(s) = \int_0^{\infty} w(x) x^{s-1}dx$ (initially in $\Re s > 0$) is entire  except for a simple pole at $s=0$ with residue 1, which can be seen by decomposing $w = w|_{[0, 1]} + w|_{(1, 2]}$. More precisely, it has a Laurent expansion
$$\widehat{w}(s) = \frac{1}{s} + c + \ldots, \quad c = \int_1^2 w(x) \frac{dx}{x}.$$
Moreover, it is rapidly decaying on vertical lines:
\begin{equation}\label{decay}
\widehat{w}(s) \ll_B |s|^{-B}
\end{equation}
for any constant $B \geq 1$ and $\Re s > -1$ (say), as can be seen by partial integration. 

\begin{lemma}\label{lemma15} Let $X \geq 1$.  Then
\begin{displaymath}
\begin{split}
\left.\sum_{\mathfrak{a}}\right.^{\ast}  \frac{1}{N\mathfrak{a}} w \left(\frac{N\mathfrak{a}}{X}\right)= \frac{1}{\zeta^{(\Delta)}(2)}\left( L(D) + \frac{L(1, \chi_{\Delta})(\log X + c + (\log \zeta^{(\Delta)})'(2))}{h}\right)+\\
+ O\left(X^{-\frac{1}{2}} D^{\frac{1}{4} - \frac{1}{2000}}\right),
\end{split}
\end{displaymath}
and
\begin{displaymath}
\begin{split}
& \left.\sum_{\mathfrak{a}}\right.^{\ast}   w \left(\frac{N\mathfrak{a}}{X}\right)= \frac{\widehat{w}(1)}{\zeta^{(\Delta)}(2)} \frac{X}{h} L(1, \chi_{\Delta}) + O(X^{\frac{1}{2}} D^{\frac{1}{4} - \frac{1}{2000}}),
\end{split}
\end{displaymath}
where as before $\sum^{\ast}$ denotes a sum over primitive principal ideals and   $(\log \zeta^{(\Delta)})'(2):=\frac d{ds} (\log \zeta^{(\Delta)}(s))\big\rvert_{s=2}$. 
\end{lemma}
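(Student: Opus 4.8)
The plan is to detect the condition ``$\mathfrak{a}$ principal'' by orthogonality of the class group characters, writing $\mathbf{1}[\mathfrak{a}\text{ principal}]=\frac1h\sum_{\chi\in\widehat{\mathcal{C}}}\chi(\mathfrak{a})$, and then to evaluate each resulting $\chi$-twisted sum over primitive ideals by Mellin inversion of the cutoff $w$. Since $w(y)=\frac{1}{2\pi i}\int_{(\sigma)}\widehat{w}(s)y^{-s}\,ds$ for $\sigma>0$, interchanging summation and integration in the region of absolute convergence and recognizing the inner Dirichlet series as $L^{\ast}_K$ turns the two sums into the contour integrals $\frac{1}{2\pi i}\int_{(\sigma)}\widehat{w}(s)X^s L^{\ast}_K(1+s,\chi)\,ds$ and $\frac{1}{2\pi i}\int_{(\sigma)}\widehat{w}(s)X^s L^{\ast}_K(s,\chi)\,ds$, respectively. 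The strategy is then to shift the contour past the relevant pole and estimate the remaining integral.

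For the first sum I would start at $\Re s=\sigma>0$ and shift to $\Re s=-\tfrac12$. For $\chi\neq\chi_0$ the function $L^{\ast}_K(1+s,\chi)=L_K(1+s,\chi)/\zeta^{(\Delta)}(2+2s)$ is holomorphic across the strip (the denominator has no zeros for $\Re(2+2s)\geq 1$), so the only pole is the simple pole of $\widehat{w}$ at $s=0$, whose residue is $L^{\ast}_K(1,\chi)$; summing over $\chi\neq\chi_0$ and invoking \eqref{res} produces the term $\frac{1}{\zeta^{(\Delta)}(2)}\bigl(L(D)-(\gamma L(1,\chi_\Delta)+L'(1,\chi_\Delta))/h\bigr)$. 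For $\chi=\chi_0$ we have $L^{\ast}_K(1+s,\chi_0)=\zeta(1+s)L(1+s,\chi_\Delta)/\zeta^{(\Delta)}(2+2s)$, so the integrand has a \emph{double} pole at $s=0$. Its residue is a routine Laurent bookkeeping using $\widehat{w}(s)X^s=\frac1s+(\log X+c)+O(s)$ and $\zeta(1+s)=\frac1s+\gamma+O(s)$, with the constant $(\log\zeta^{(\Delta)})'(2)$ entering through the Taylor expansion of $1/\zeta^{(\Delta)}(2+2s)$ at $s=0$; combining this residue with the previous term collapses the $-(\gamma L(1,\chi_\Delta)+L'(1,\chi_\Delta))/h$ contribution and leaves exactly the claimed main term.

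For the second sum the same steps apply, now starting at $\Re s=\sigma>1$ and shifting to $\Re s=\tfrac12$. Here $\widehat{w}$ is regular at every point crossed, and for $\chi\neq\chi_0$ the function $L^{\ast}_K(s,\chi)$ is holomorphic in $\Re s>\tfrac12$, so these characters contribute nothing to the main term. The only pole crossed is the simple pole of $L^{\ast}_K(s,\chi_0)$ at $s=1$ coming from $\zeta(s)$, with residue $\widehat{w}(1)\,X\,L(1,\chi_\Delta)/\zeta^{(\Delta)}(2)$; dividing by $h$ gives precisely the stated main term.

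In both cases the one genuine difficulty is bounding the shifted integral. On the line $\Re s=-\tfrac12$ (resp.\ $\tfrac12$) one has $|X^s|=X^{-1/2}$ (resp.\ $X^{1/2}$) and $L^{\ast}_K(1+s,\chi)=L_K(\tfrac12+it,\chi)/\zeta^{(\Delta)}(1+2it)$. Here I would insert the subconvexity bound \eqref{subconvex}, which yields the saving $D^{1/4-\delta}$ with $\delta=1/1889>1/2000$, together with the standard estimate $|\zeta^{(\Delta)}(1+2it)|^{-1}\ll(\log(2+|t|))^{O(1)}$ on the one-line (the finitely many removed Euler factors are harmless, contributing at most $D^{o(1)}$). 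The rapid decay \eqref{decay} of $\widehat{w}(s)$ on vertical lines makes the $t$-integral converge and absorbs the polynomial growth $(1+|t|)^A$, producing the error terms $O(X^{-1/2}D^{1/4-1/2000})$ and $O(X^{1/2}D^{1/4-1/2000})$ as claimed. The subconvex input is exactly what upgrades the trivial exponent $\tfrac14$ to $\tfrac14-\tfrac1{2000}$, and is the heart of the estimate.
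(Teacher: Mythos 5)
Your proposal is correct and follows essentially the same route as the paper: Mellin inversion combined with orthogonality of class group characters, a contour shift to $\Re s=-\tfrac12$ (resp.\ $\Re s=\tfrac12$) picking up the double pole at $s=0$ whose residue is exactly \eqref{pole} (resp.\ the simple pole at $s=1$), and the subconvexity bound \eqref{subconvex} together with the decay \eqref{decay} of $\widehat{w}$ for the shifted integral. The paper states this in one line as a ``standard contour shift argument''; your write-up supplies the same details, including the correct identification of where $(\log\zeta^{(\Delta)})'(2)$ and the term $L(D)$ via \eqref{res} enter.
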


\begin{proof} This is a standard contour shift argument. We have by Mellin inversion and orthogonality of class group characters
\begin{displaymath}
\begin{split}
\left.\sum_{\mathfrak{a}}\right.^{\ast} \frac{1}{N\mathfrak{a}} w \left(\frac{N\mathfrak{a}}{X}\right) = \int_{(2)} \frac{1}{h} \sum_{\chi \in \widehat{\mathcal{C}}} L^{\ast}_K(s+1, \chi)\widehat{w}(s) X^s \frac{ds}{2\pi i}
\end{split}
\end{displaymath}
where here and in the following $\int_{(c)}$ denotes a complex contour integral over the vertical line $\Re s = c$. 
Shifting the contour to $\Re s = -1/2$, computing the residue of the double pole at $s=0$,  namely 
\begin{equation}\label{pole}
\begin{split}
&\frac{1}{h} \sum_{\chi \not= \chi_0} L^{\ast}_K(1, \chi) + \frac{L(1, \chi_{\Delta})(\log X + c +  (\log \zeta^{(\Delta)})'(2) + \gamma ) + L'(1, \chi_{\Delta})}{\zeta^{(\Delta)}(2) h} = \\
&=\frac{1}{\zeta^{(\Delta)}(2)}\left( L(D) + \frac{L(1, \chi_{\Delta})(\log X + c + (\log \zeta^{(\Delta)})'(2))}{h}\right)
\end{split}
\end{equation}
by  \eqref{res},  
and estimating the remaining integral with \eqref{subconvex} and \eqref{decay} gives the first formula. The proof of  the second formula is similar.  \end{proof}

The following lemma  is an application of Burgess' bound. 

\begin{lemma}\label{burgess} For $\varepsilon > 0$ we have
$$L'(1, \chi_{\Delta}) \geq - \left(\frac{3}{8} + \varepsilon\right) L(1, \chi_{\Delta})\left( \log D  + O_{\varepsilon}(1)\right).$$
\end{lemma}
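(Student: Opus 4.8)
The plan is to exploit the non-negativity of the divisor-twisted function $a(n)=\sum_{d\mid n}\chi_\Delta(d)$. Since $\chi_\Delta$ is a \emph{real} character, we have $a(n)\ge 0$ for all $n$ and, crucially, $a(m^2)\ge 1$ for every $m$; moreover its Dirichlet series is $\zeta(s)L(s,\chi_\Delta)=\sum_n a(n)n^{-s}$. By the computation leading to \eqref{res}, the constant Taylor coefficient of $\zeta(s)L(s,\chi_\Delta)$ at $s=1$ equals $L'(1,\chi_\Delta)+\gamma L(1,\chi_\Delta)$. Thus a lower bound for $L'(1,\chi_\Delta)$ follows from a lower bound for this constant term, which I would access through a smoothly weighted partial sum of $a(n)/n$ and then produce by simply dropping non-negative terms.

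Concretely, fix $X=D^{3/8+\varepsilon}$ and the smooth weight $w$ introduced above, and consider $\sum_n \tfrac{a(n)}{n}\,w(n/X)$. By Mellin inversion this equals $\frac{1}{2\pi i}\int_{(2)}\zeta(1+s)L(1+s,\chi_\Delta)\widehat{w}(s)X^s\,ds$. Shifting the contour to $\Re s=-1/2$ and evaluating the residue of the \emph{double} pole at $s=0$ (arising from the simple poles of $\zeta(1+s)$ and of $\widehat w(s)$, with $L(1+s,\chi_\Delta)$ and $X^s$ regular) gives
\[
\sum_n \frac{a(n)}{n}\,w\Bigl(\frac nX\Bigr)=L(1,\chi_\Delta)\log X+L'(1,\chi_\Delta)+(\gamma+c)L(1,\chi_\Delta)+O\bigl(X^{-1/2}\Delta^{3/16+\varepsilon}\bigr),
\]
where the error is the shifted integral, estimated by the Burgess subconvexity bound $L(1/2+it,\chi_\Delta)\ll_\varepsilon (1+|t|)^{A}\Delta^{3/16+\varepsilon}$ together with the rapid decay \eqref{decay}. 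The choice $X=D^{3/8+\varepsilon}$ is exactly the threshold at which $X^{-1/2}\Delta^{3/16}$ becomes a negative power of $D$; this is the sole origin of the constant $\tfrac38$, since $\log X=(\tfrac38+\varepsilon)\log D$.

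Now I would invoke positivity. As $a(n)\ge 0$, $w\ge 0$ and $w\equiv 1$ on $[0,1]$, keeping only the perfect squares $m^2\le X$ yields
\[
\sum_n \frac{a(n)}{n}\,w\Bigl(\frac nX\Bigr)\ \ge\ \sum_{m\le\sqrt X}\frac{1}{m^2}=\zeta(2)+O(X^{-1/2})>0.
\]
Substituting into the previous display and solving for $L'(1,\chi_\Delta)$ gives
\[
L'(1,\chi_\Delta)\ \ge\ \zeta(2)-L(1,\chi_\Delta)\log X-(\gamma+c)L(1,\chi_\Delta)-O(D^{-\delta}),
\]
and since $\log X=(\tfrac38+\varepsilon)\log D$ the two middle terms are $-(\tfrac38+\varepsilon)L(1,\chi_\Delta)\bigl(\log D+O_\varepsilon(1)\bigr)$, which is the claim.

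The main obstacle is the error analysis at the critical length $X=D^{3/8+\varepsilon}$: one must check that the shifted contour integral is genuinely a negative power of $D$, and the Burgess bound (in its subconvex form for $L(s,\chi_\Delta)$, equivalently as $\sum_{n\le t}\chi_\Delta(n)\ll_\varepsilon t^{1/2}\Delta^{3/16+\varepsilon}$) is precisely strong enough for this. A secondary but essential point is robustness when $L(1,\chi_\Delta)$ is small (fields of small regulator), where the additive error $O(D^{-\delta})$ could otherwise dominate the right-hand side; it is exactly the refinement $a(m^2)\ge 1$—yielding the genuine positive constant $\zeta(2)$ rather than merely $0$—that absorbs the error and makes the inequality valid for all $D$. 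Working with the smooth weight $w$ rather than a sharp cutoff is what keeps this error clean, since a sharp truncation would introduce an extra remainder $\sum_{d\le X}\tfrac{\chi_\Delta(d)}{d}\,O(d/X)$ that itself requires cancellation from Burgess.
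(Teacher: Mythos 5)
Your proof is correct, and while it shares the paper's skeleton---Mellin inversion applied to a weighted sum of $r(n)=\sum_{d\mid n}\chi_{\Delta}(d)$, a contour shift to $\Re s=-1/2$ picking up the double-pole residue $L(1,\chi_{\Delta})(\log X+O(1))+L'(1,\chi_{\Delta})$, and Burgess/Heath-Brown subconvexity \eqref{hybrid} to control the shifted integral, with the exponent $3/8$ arising as twice $3/16$ in both arguments---it diverges at the decisive step, namely the lower bound for the weighted sum. The paper takes $X=D$ and lower-bounds $\mathcal{S}$ by the contribution of $n\geq Y$ with $Y=D^{3/8+4\varepsilon}$, which requires a second contour-shift computation (for the smoothed counts $\sum_{Y\leq n\leq t}r(n)$) to extract a main term $(1-\varepsilon)L(1,\chi_{\Delta})\log(X/3Y)$; the resulting inequality carries an additive error $O(D^{-\varepsilon})$ that must be compared with $L(1,\chi_{\Delta})\log D$, and this is exactly where Siegel's theorem enters. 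You instead take the short length $X=D^{3/8+\varepsilon}$ directly and invoke the classical positivity $r(m^2)\geq 1$ to bound the sum below by $\sum_{m\leq\sqrt{X}}m^{-2}\geq 1$: an absolute positive constant that simultaneously supplies the needed lower bound and absorbs the power-saving error from the contour shift. This buys you two things: the argument is shorter (one contour shift instead of two, one parameter instead of two), and it is effective, since Siegel's ineffective lower bound on $L(1,\chi_{\Delta})$ is no longer needed. The only points to make explicit are (i) that the conclusion as derived holds for $D\geq D_0(\varepsilon)$, the finitely many smaller $D$ being absorbed into $O_{\varepsilon}(1)$ because $L(1,\chi_{\Delta})>0$ (the paper's proof has the same caveat), and (ii) that the $\varepsilon$ in the exponent of \eqref{hybrid} must be taken smaller than half the $\varepsilon$ in your choice of $X$ so that $X^{-1/2}D^{3/16+\varepsilon'}$ is genuinely a negative power of $D$; both are routine.
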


\begin{proof} Let 
$$r(n) := \sum_{d \mid n} \chi_{\Delta}(d).$$
For $X \geq 1$ we compute by Mellin inversion
$$\mathcal{S} := \sum_{n \leq X} \frac{r(n)}{n} \left(1 - \frac{n}{X}\right) = \int_{(2)} \zeta(s+1) L(s+1, \chi_{\Delta}) \frac{X^s}{s(s+1)} \frac{ds}{2\pi i}.$$
We shift the contour to $\Re s = -1/2$. The residue at $s=0$ contributes
\begin{equation}\label{one}
L(1, \chi_{\Delta})( \log X + \gamma - 1) + L'(1, \chi_{\Delta}),
\end{equation}
while Heath-Brown's hybrid bound \cite{HB}
\begin{equation}\label{hybrid}
L(1/2 + it, \chi_{\Delta}) \ll ((1 + |t|)D)^{3/16+\varepsilon}
\end{equation}
along with the classical convexity estimate  $\zeta(1/2 + it ) \ll (1 + |t|)^{1/4+\varepsilon}$ bounds the remaining integral by
\begin{equation}\label{two}
O(X^{-1/2} D^{3/16+\varepsilon}).
\end{equation}
On the other hand, for a parameter $1 \leq Y \leq X/10$ we have by partial summation
$$\mathcal{S} \geq \sum_{Y \leq n \leq X} \frac{r(n)}{n} \left(1 - \frac{n}{X}\right) = \int_Y^X \sum_{Y \leq n \leq t} r(n) \frac{dt}{t^2} \geq \int_{3Y}^X \sum_{Y \leq n \leq t} r(n) \frac{dt}{t^2}. $$
For $0 < \varepsilon < 1/10$ and $t \geq 3Y$ let $V_{t} : \mathbb{R} \rightarrow [0, 1]$ be a smooth function with  support in $[Y, t]$ that is 1 on $[2Y, (1-\varepsilon)t]$ and satisfies $V_t^{(j)}(x) \ll_{\varepsilon, j} x^{-j}$ for $j \in \mathbb{N}_0$ uniformly in $t$ and $Y$. It follows from the definition and repeated integration by parts that
\begin{equation}\label{boundV}
\widehat{V}_t(s) = \int_0^{\infty} V_t(s) x^{s-1} dx \ll t^{\Re s} |s|^{-10}
\end{equation}
for $\Re s  \geq  1/2$. 
Then again by Mellin inversion we have
\begin{displaymath}
\begin{split}
\mathcal{S}& \geq \int_{3Y}^X  \sum_n  r(n)V_t(n) \frac{dt}{t^2} =  \int_{3Y}^X \int_{(2)} \zeta(s) L(s, \chi_{\Delta}) \widehat{V}_t(s) \frac{ds}{2\pi i} \frac{dt}{t^2}.
\end{split}
\end{displaymath}
Shifting the $s$-contour to $\Re s = 1/2$ and using \eqref{hybrid} and \eqref{boundV}, we obtain
\begin{displaymath}
\begin{split}
\mathcal{S}& \geq  \int_{3Y}^X \left( \Bigr(t(1-\varepsilon) - 2Y\Bigl) L(1, \chi_{\Delta}) + O(t^{1/2} D^{3/16+\varepsilon})\right) \frac{dt}{t^2}\\
& = L(1, \chi_{\Delta}) \left( (1 - \varepsilon) \log \frac{X }{3Y} + O(1)\right) + O(D^{3/16+\varepsilon} Y^{-1/2}). 
\end{split}
\end{displaymath}
Combining this with \eqref{one} and \eqref{two} yields
$$L(1, \chi_{\Delta})(\log X + O(1)) + L'(1, \chi_{\Delta}) \geq L(1, \chi_{\Delta}) \left(  (1 - \varepsilon) \log \frac{X}{Y} + O(1)\right)  + O(D^{3/16+\varepsilon} Y^{-1/2}).$$
Now we choose
$$Y = D^{3/8 + 4\varepsilon}, \quad X = D\,\, (\text{say})$$
getting
$$L'(1, \chi_{\Delta}) \geq - \left(\frac{3}{8} + 5\varepsilon\right)L(1, \chi_{\Delta}) \left(\log D  + O(1)\right) + O(D^{-\varepsilon}).$$
By Siegel's theorem, the last error term is negligible compared to the main term, and the result follows. \end{proof}

\section{Bounds for the sum of convergents}\label{sec:main estimate}

\begin{theorem}\label{asymp} We have
$$\sum_{i=1}^s u_i = \frac{ \sqrt{\Delta} }{\zeta^{(\Delta)}(2)} \Bigl(L(D)  +  \frac{1}{h} L(1, \chi_{\Delta})(\log \sqrt{D} + O(1)) \Bigr)    + O(D^{\frac{1}{2} - \frac{1}{2000}} ).$$
\end{theorem}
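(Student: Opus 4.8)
The plan is to prove Theorem \ref{asymp} by relating the sum $\sum_{i=1}^s u_i$ to the smooth sum $\left.\sum_{\mathfrak{a}}\right.^{\ast} \frac{1}{N\mathfrak{a}} w\left(\frac{N\mathfrak{a}}{X}\right)$ over primitive principal ideals, evaluated at the natural scale $X \asymp \sqrt{\Delta}$, and then invoking Lemma \ref{lemma15} to extract the main term. The bridge between the two is Corollary \ref{estimate sum}(a), which already sandwiches $\sum_{i=1}^s u_i$ between two sharp cutoff sums $\left.\sum\right.^{\ast}_{N\mathfrak{a} < \sqrt{\Delta}} \frac{\sqrt{\Delta}}{N\mathfrak{a}}$ and $\left.\sum\right.^{\ast}_{N\mathfrak{a} < \sqrt{\Delta}/2} \frac{\sqrt{\Delta}}{N\mathfrak{a}}$ up to an error of size $O\bigl(\left.\sum\right.^{\ast}_{N\mathfrak{a} < \sqrt{\Delta}} 1\bigr)$.

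First I would write $\sum_{i=1}^s u_i = \sqrt{\Delta}\cdot \left.\sum_{\mathfrak{a}}\right.^{\ast} \frac{1}{N\mathfrak{a}} w\left(\frac{N\mathfrak{a}}{X}\right) + (\text{error})$ for a suitable choice $X \asymp \sqrt{\Delta}$. Since the weight $w$ is $1$ on $[0,1]$ and supported on $[0,2]$, the smoothed sum agrees with the sharp-cutoff sum of Corollary \ref{estimate sum}(a) up to contributions from ideals with $N\mathfrak{a} \in [\sqrt{\Delta}/2, 2\sqrt{\Delta}]$ (say), whose total weighted contribution is $O\bigl(\left.\sum\right.^{\ast}_{N\mathfrak{a} \asymp \sqrt{\Delta}} 1\bigr)$. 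Applying the first formula of Lemma \ref{lemma15} with $X \asymp \sqrt{\Delta}$ gives the main term
$$\frac{\sqrt{\Delta}}{\zeta^{(\Delta)}(2)}\left(L(D) + \frac{L(1,\chi_\Delta)(\log\sqrt{D} + O(1))}{h}\right),$$
where $\log X = \log\sqrt{\Delta} = \log\sqrt{D} + O(1)$ absorbs the constants $c$ and $(\log\zeta^{(\Delta)})'(2)$ into the $O(1)$, and contributes an analytic error of size $O(X^{-1/2} D^{1/4 - 1/2000}) = O(D^{1/2 - 1/2000}\cdot \Delta^{-1/2}\cdot\Delta^{1/2}/\sqrt{D})$, i.e.\ $O(D^{1/2 - 1/2000})$ after multiplying by the prefactor $\sqrt{\Delta}$.

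The remaining task is to show that the error terms are all absorbed into $O(D^{1/2 - 1/2000})$. The two sources of error are (i) the combinatorial gap $O\bigl(\sqrt{\Delta}\cdot\left.\sum\right.^{\ast}_{N\mathfrak{a} \asymp \sqrt{\Delta}} 1\bigr)$ coming from Corollary \ref{estimate sum} and the smoothing, and (ii) the analytic error $\sqrt{\Delta}\cdot O(X^{-1/2}D^{1/4-1/2000})$ from Lemma \ref{lemma15}. For (ii) I computed above that it is $O(D^{1/2-1/2000})$. For (i), I would bound $\left.\sum\right.^{\ast}_{N\mathfrak{a} \asymp \sqrt{\Delta}} 1$ using the second formula of Lemma \ref{lemma15}: applied at scale $X \asymp \sqrt{\Delta}$ it gives $\left.\sum\right.^{\ast}_{\mathfrak{a}} w(N\mathfrak{a}/X) = \frac{\widehat{w}(1)}{\zeta^{(\Delta)}(2)}\frac{X}{h}L(1,\chi_\Delta) + O(X^{1/2}D^{1/4-1/2000})$, and differencing at scales $X$ and $X/2$ bounds the dyadic count by $O\bigl(\frac{\sqrt{\Delta}}{h}L(1,\chi_\Delta) + D^{1/4+1/4-1/2000}\bigr)$.

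The main obstacle is controlling the count (i) from Corollary \ref{estimate sum}, since $\sqrt{\Delta}\cdot\frac{\sqrt{\Delta}}{h}L(1,\chi_\Delta) \asymp \frac{\Delta}{h}L(1,\chi_\Delta)$ must be shown to be $O(D^{1/2-1/2000})$ — and by the class number formula $h \asymp \sqrt{\Delta}\,L(1,\chi_\Delta)/\log\varepsilon$ (with $\log\varepsilon$ the regulator), so $\frac{\sqrt{\Delta}}{h}L(1,\chi_\Delta) \asymp \log\varepsilon/\sqrt{\Delta}\cdot\sqrt{\Delta} = \log\varepsilon$, which can be as large as $\asymp\sqrt{D}\log D$. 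So a naive bound on the count is \emph{not} enough, and the delicate point is that the error in Corollary \ref{estimate sum} is really $O\bigl(\left.\sum\right.^{\ast}_{N\mathfrak{a}<\sqrt{\Delta}} 1\bigr)$ without the extra $\sqrt{\Delta}$ factor; that is, the count of primitive principal ideals of norm $<\sqrt{\Delta}$ (not weighted by $\sqrt{\Delta}/N\mathfrak{a}$) appears. I would therefore estimate $\left.\sum\right.^{\ast}_{N\mathfrak{a}<\sqrt{\Delta}} 1$ directly via the second formula of Lemma \ref{lemma15} at scale $X \asymp \sqrt{\Delta}$, giving $O\bigl(\frac{\sqrt{\Delta}}{h}L(1,\chi_\Delta) + D^{1/4-1/2000}\cdot\Delta^{1/4}\bigr)$, and then bound $\frac{\sqrt{\Delta}}{h}L(1,\chi_\Delta) \ll \log\varepsilon \ll \sqrt{D}\log D$. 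Since this alone would only give an error of size $\sqrt{D}\log D$ rather than the claimed power saving, I expect the actual argument to require that this count error enters \emph{without} the prefactor $\sqrt{\Delta}$ — i.e.\ the combinatorial error from Corollary \ref{estimate sum} is $O\bigl(\left.\sum\right.^{\ast}_{N\mathfrak{a}<\sqrt{\Delta}} 1\bigr)$ and the smoothing error is similarly a plain (unweighted) count — so that the total count error is $O\bigl(\sqrt{\Delta}\,L(1,\chi_\Delta)/h + D^{1/2-1/2000}\bigr)$, and I would finally use $L(1,\chi_\Delta)/h \ll \Delta^{-1/2}\log\varepsilon \cdot 1 \ll D^{-1/2}\cdot\log D$ together with the fact that the leading main term already contains $\frac{\sqrt{\Delta}}{h}L(1,\chi_\Delta)\log\sqrt{D}$, so that the unweighted count is genuinely of lower order than the main term and is swallowed by the stated error after carefully tracking that the count is multiplied only by $O(1)$ rather than $\sqrt{\Delta}$.
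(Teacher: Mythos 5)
Your proposal is correct and follows essentially the same route as the paper: the paper likewise sandwiches $\sum_{i=1}^s u_i$ via Corollary \ref{estimate sum}(a) between smoothed sums at scales $X=\sqrt{\Delta}$ and $X=\sqrt{\Delta}/4$, applies Lemma \ref{lemma15} for the main term, and absorbs the unweighted ideal count $O\bigl(\sqrt{\Delta}\,L(1,\chi_{\Delta})/h\bigr)$ into the $O(1)$ inside the main term exactly as you conclude. The only blemish is the intermediate bound $\Delta^{-1/2}\log\varepsilon \ll D^{-1/2}\log D$ (it should be $\ll \log D$), but this does not affect your argument since the absorption only requires comparing the count against the main term, which carries the same factor $\sqrt{\Delta}\,L(1,\chi_{\Delta})/h$.
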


\begin{proof} Using Corollary \ref{estimate sum} we prove an upper bound and a lower bound. On the one hand  we have 
\begin{displaymath}
\begin{split}
\sum_{i=1}^s u_i & \leq \underset{N\mathfrak{a} \leq   \sqrt{\Delta}}{\left.\sum \right.^{\ast}}  \frac{  \sqrt{\Delta}}{N \mathfrak{a}}  \leq \left.\sum_{\mathfrak{a}}\right.^{\ast}   \frac{  \sqrt{\Delta}}{N \mathfrak{a}}w\left( \frac{N\mathfrak{a}}{ \sqrt{\Delta}}\right),
\end{split}
\end{displaymath}
so that
$$\sum_{i=1}^s u_i \leq   \frac{ \sqrt{\Delta} }{\zeta^{(\Delta)}(2)}  \Bigl(L(D)  +  \frac{1}{h} L(1, \chi_{\Delta})(\log \sqrt{D} + O(1)) \Bigr)     + O(D^{\frac{1}{2} - \frac{1}{2000}} )$$
by Lemma \ref{lemma15}. 
On the other hand, since $\text{supp}(w) \subseteq [0, 2]$,  we have 
 \begin{displaymath}
\begin{split}
\sum_{i=1}^s u_i & \geq 
\left.\sum_{\mathfrak{a}} \right.^{\ast} \frac{ \sqrt{\Delta}}{N\mathfrak{a}} w \left(\frac{2 N\mathfrak{a}}{\frac{1}{2}  \sqrt{\Delta}}\right) + O\left(\left.\sum_{\mathfrak{a}} \right.^{\ast}  w\left(\frac{N\mathfrak{a}}{ \sqrt{\Delta}}\right) \right),
 \end{split}
\end{displaymath}
getting a corresponding lower bound again by Lemma \ref{lemma15}. 
\end{proof}

Since $u_s = 2\lfloor\omega_D\rfloor$ or $2\lfloor\omega_D\rfloor-1$, it is clear that the error term $O(D^{\frac{1}{2} - \frac{1}{2000}} )$ is smaller than the main term. However, since $L(D)$ may be negative, it is not clear if $L(D) + \log \sqrt{D} L(1, \chi_{\Delta})/h$ grows more quickly than   $L(1, \chi_{-D})/h$ as $D \rightarrow \infty$, so that a priori Theorem \ref{asymp} may only give an upper bound. In order to understand the behaviour of the main term, it is more convenient to use the left hand side of the expression \eqref{pole} with $X = \sqrt{D}$. Notice that all $L$-values are real (since $L(s, \chi) = L(s, \bar{\chi}) = \overline{L(s, \chi)}$ for $\chi \in \widehat{\mathcal{C}}$ and $s\in \mathbb{R}$) and, except for $L'(1, \chi_{\Delta})$, positive and that $(\log \zeta^{(\Delta)})'(2) \asymp 1$.  Since  Lemma \ref{burgess} implies 
$$ L(1, \chi_{\Delta}) \log \sqrt{D} + L'(1, \chi_{\Delta}) \gg L(1, \chi_{\Delta}) \log \sqrt{D},$$
for sufficiently large $D$, \eqref{pole} implies 
\begin{equation}\label{lower}
L(D)  +  \frac{1}{h} L(1, \chi_{\Delta})(\log \sqrt{D} + O(1)) \gg \frac{1}{h} L(1, \chi_{\Delta})\log \sqrt{D},
\end{equation}
for sufficiently large $D$, and hence
\begin{cor}\label{Korollar} We have
$$\sum_{i=1}^s u_i \sim  \frac{\sqrt{\Delta} }{\zeta^{(\Delta)}(2)}   \Bigl(L(D)  +  \frac{1}{h} L(1, \chi_{\Delta})\log \sqrt{D} \Bigr) $$
as $D \rightarrow \infty$. 
\end{cor}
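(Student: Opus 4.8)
The plan is to obtain Corollary~\ref{Korollar} essentially as a bookkeeping consequence of Theorem~\ref{asymp}, by showing that after dividing by the putative main term
$$A := \frac{\sqrt{\Delta}}{\zeta^{(\Delta)}(2)}\Bigl(L(D) + \tfrac{1}{h}L(1,\chi_{\Delta})\log\sqrt{D}\Bigr),$$
both the bracketed $O(1)$-contribution and the power-saving error $O(D^{1/2-1/2000})$ appearing in Theorem~\ref{asymp} are of smaller order than $A$. Writing $M := L(D) + \tfrac{1}{h} L(1,\chi_{\Delta})\log\sqrt{D}$ so that $A = \frac{\sqrt{\Delta}}{\zeta^{(\Delta)}(2)}M$, Theorem~\ref{asymp} reads
$$\sum_{i=1}^s u_i = A + \frac{\sqrt{\Delta}}{\zeta^{(\Delta)}(2)}\cdot\tfrac{1}{h} L(1,\chi_{\Delta})\,O(1) + O(D^{1/2-1/2000}),$$
so it suffices to prove that the last two terms are $o(A)$.

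First I would dispose of the $O(1)$-correction using the lower bound \eqref{lower}. Since $\tfrac{1}{h} L(1,\chi_{\Delta})\log\sqrt{D}$ grows with $D$ while the $O(1)$ in \eqref{lower} stays bounded, for large $D$ the bound \eqref{lower} yields $M \gg \tfrac{1}{h} L(1,\chi_{\Delta})\log\sqrt{D}>0$; in particular $A>0$. Consequently the ratio of the correction term to $A$ is
$$\frac{\tfrac{1}{h} L(1,\chi_{\Delta})\,O(1)}{M} \ll \frac{1}{\log\sqrt{D}} \longrightarrow 0,$$
which already gives $\sum_{i=1}^s u_i = A(1+o(1)) + O(D^{1/2-1/2000})$.

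It remains to beat the error $O(D^{1/2-1/2000})$, and here \eqref{lower} by itself does not obviously furnish a lower bound for $A$ of the required size $D^{1/2-\delta}$. The extra ingredient is the elementary lower bound for the sum: since $u_s = 2\lfloor\omega_D\rfloor$ or $2\lfloor\omega_D\rfloor-1$ and $\lfloor\omega_D\rfloor\geq\frac12\sqrt{\Delta}$, we have $\sum_{i=1}^s u_i \geq u_s \geq \sqrt{\Delta}-1 \gg \sqrt{D}$. Feeding this back into $\sum_i u_i = A(1+o(1)) + O(D^{1/2-1/2000})$ and using $O(D^{1/2-1/2000}) = o(\sqrt{D})$ forces $A(1+o(1))\gg\sqrt{D}$, hence $A\gg\sqrt{D}$. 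Therefore $O(D^{1/2-1/2000})/A \ll D^{-1/2000}\to 0$, and combined with the previous paragraph we conclude $\sum_{i=1}^s u_i = A(1+o(1))$, i.e.\ $\sum_{i=1}^s u_i \sim A$, as claimed.

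The only genuinely delicate point — and the reason \eqref{lower} and Lemma~\ref{burgess} are needed at all — is that $L(D)$ may be negative, so that $M$ could a priori be small through cancellation against $\tfrac{1}{h} L(1,\chi_{\Delta})\log\sqrt{D}$. This is exactly what \eqref{lower} rules out, resting as it does on the positivity of $\frac{1}{h}\sum_{\chi\neq\chi_0}L^{\ast}_K(1,\chi)$ together with Burgess' bound from Lemma~\ref{burgess}. Once $M>0$ is secured the argument is pure comparison of orders of magnitude, the one non-routine move being the bootstrap that uses the elementary estimate $\sum_i u_i \gg \sqrt{D}$ to upgrade the weak (merely poly-logarithmic) lower bound for $A$ coming from \eqref{lower} to $A\gg\sqrt{D}$, which is what actually defeats the power-saving error term.
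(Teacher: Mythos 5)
Your argument is correct and matches the paper's own derivation: both deduce the asymptotic from Theorem~\ref{asymp} by using \eqref{lower} (via Lemma~\ref{burgess}) to absorb the $\frac{1}{h}L(1,\chi_{\Delta})\,O(1)$ correction, and by using the elementary bound $\sum_i u_i \geq u_s \gg \sqrt{\Delta}$ to force the main term to be $\gg \sqrt{D}$ and hence dominate the $O(D^{1/2-1/2000})$ error. The only difference is the order in which the two error contributions are treated, which is immaterial.
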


We have the classical upper bounds \cite[Lemma 4]{Fo}
\begin{equation}\label{fogels}
L(s, \chi_{\Delta}) \ll \log \Delta, \quad L_K(s, \chi) \ll (\log \Delta)^2
\end{equation}
for $|s-1| \leq 1/\log \Delta$ and $\chi \in \widehat{\mathcal{C}}$, from which we also conclude $L'(1, \chi_{\Delta}) \ll (\log \Delta)^2$ by Cauchy's integral formula. 
Thus we obtain
$$\sqrt{\Delta} \leq  2\lfloor\omega_D\rfloor \leq   \sum_{i=1}^s u_i \ll  \sqrt{\Delta} (\log \Delta)^2 $$
from \eqref{pole}, which by \eqref{MD} and \eqref{s odd} implies the upper bound in  Theorem \ref{main 1a}. As mentioned in the introduction, the lower bound $M^{\ast}_{D, \varepsilon} \geq \sqrt{\Delta}$ is trivial if $s$ is odd. 

\

\textbf{Concluding remarks:} 

(1) If one is willing to assume the Generalized Riemann Hypothesis, then $L(1, \chi_{\Delta}) \gg 1/\log\log D$ \cite{Li},  and instead of \eqref{fogels} we have
$$L(1, \chi_{\Delta}), L_K(1, \chi) \ll \log\log D, \quad L'(1, \chi_{\Delta}) \ll (\log\log D)^2$$
(cf.\ \cite[Section 5]{Li1}), so that again by \eqref{pole} for the upper bound and \eqref{lower} for the lower bound we obtain
 \begin{equation}\label{GRH}
 \sqrt{D} \left( 1+ \frac{\log D}{h \log\log D}\right)  \ll  \sum_{i=1}^s u_i \ll  \sqrt{D} \left( 1+ \frac{\log D}{h}\right) (\log\log D)^2.
 \end{equation}
 
 \
 
(2) Using the second part of Lemma \ref{lemma15}, we obtain easily
\begin{equation}\label{estimate s}
\frac{\sqrt{D}}{h} L(1, \chi_{\Delta}) + O(D^{\frac{1}{2} - \frac{1}{2000}})
 \ll s \ll \frac{\sqrt{D}}{h} L(1, \chi_{\Delta}) + O(D^{\frac{1}{2}  -\frac{1}{2000}}),
 \end{equation}
 where the lower bound is of course a trivial statement if $h \geq D^{1/2000}$ (but conjecturally this does not happen very frequently).
 
 \

(3) In the same way as in Theorem \ref{asymp} one can show
$$M_D = \frac{  \sqrt{\Delta} }{\zeta^{(\Delta)}(2)} \Bigl(L^-(D)  +  \frac{1}{h^+} L(1, \chi_{\Delta})(\log \sqrt{D} + O(1)) \Bigr)    + O(D^{\frac{1}{2} - \frac{1}{2000}} )$$
where $h^+$ is the narrow class number (which equals $h$ if $\mathcal{O}_K$ has a unit of negative norm and otherwise equals $2 h$) and $L^-(D)$ is the constant Taylor coefficient of 
$\zeta(s,   \text{princ}^{-}) =   \sum_{\mathfrak{a}}  (N\mathfrak{a})^{-s}.$
 Alternatively, one can also combine Corollary \ref{Korollar} with \cite[Satz 2, \S 14]{Za2} for the alternating sum, to obtain a formula form for $M_D$. In many cases, however, the main term will be smaller than the error term, so   that it is unclear to which extent this formula is meaningful.

\end{document}